\def\E{\mathbb E}
\def\R{\mathbb R}
\def\N{\mathbb N}
\def\P{\mathbb P}
\def\Z{\mathbb Z}
\def\bfB{\mathbf B}
\def\bfV{\mathbf V}
\def\bfP{\mathbf P}
\def\bfQ{\mathbf Q}
\def\cF{{\mathcal F}}
\newtheorem{Def}{Definition}
\newtheorem{Prop}{Proposition}
\newtheorem{lem}{Lemma}
\newtheorem{Theo}{Theorem}
\newtheorem{rem}{Remark}
\begin{document}

\begin{frontmatter}
\title{Parametric inference and forecasting for continuously invertible volatility models}
\runtitle{Inference under invertibility}
\begin{aug}
 \author{\fnms{Olivier} \snm{Wintenberger} \ead[label=e1]{wintenberger@ceremade.dauphine.fr}}
 \affiliation{Centre De Recherche en Math\'ematiques de la D\'ecision,  UMR CNRS 7534
         \\\small Universit\'e de Paris-Dauphine}
{\footnotesize
\centerline{  Universit\'e de Paris-Dauphine}
}
\printead*{e1}
\\

\and
\\

\author{\fnms{Sixiang} \snm{Cai}\ead[label=e2]{sixiang.cai@u-cergy.fr}}
\affiliation{Universit\'e de Cergy-Pontoise, D\'epartement de Math\'ematiques , UMR CNRS 8088}
{\footnotesize
\centerline{ Universit\'e de Cergy-Pontoise}}
\printead*{e2}
\date{}

\runauthor{O. Wintenberger and S. Cai}
\end{aug}

\begin{abstract}
We introduce the  notion of continuous  invertibility on a compact set for volatility models driven by a Stochastic Recurrence Equation (SRE). We prove in this context the strong consistency and the asymptotic normality of  the $M$-estimator associated with the Quasi-Likelihood criteria. We recover known results 
on univariate and multivariate GARCH type models where the estimator coincides with the classical QMLE. In EGARCH type models. our approach gives a strongly consistence  and  asymptotically normal estimator when the limiting covariance matrix exists. We provide a necessary and sufficient condition for the existence of this limiting covariance matrix in the EGARCH(1,1) model introduced in \cite{Nelson1991}. We exhibit for the first time sufficient conditions for the asymptotic normality of the estimation procedure used in practice since \cite{Nelson1991}.\end{abstract}

\begin{keyword}[class=AMS]
\kwd[Primary ]{62F12}
\kwd[; secondary ]{60H25, 62F10, 62M20, 62M10, 91B84}
\end{keyword}

\begin{keyword}
\kwd{Invertibility, volatility models, parametric estimation, strong consistency, asymptotic normality, asymmetric GARCH, exponential GARCH, 
stochastic recurrence equation, stationarity}
\end{keyword}

\end{frontmatter}

\section{Introduction}\label{sec:intro}

Since the seminal paper of \citet{Engle1982} and  \citet{Bollerslev1986}, the General Autoregressive Conditional Heteroskedasticity  (GARCH) type models have been successfully applied to volatility modeling.   \citet{Nelson1991} is the first attempt to introduce non linearity into volatility models with the Exponential-GARCH(1,1) type models. Since then, many other volatility models have been introduced: APGARCH of \citet{Ding1993}, GJR-GARCH of \citet{GJR1993}, TGARCH of \citet{Zakoian1994}, etc. Non linear volatility models have been used extensively in empirical researches (see \citet{BJ2006} among many others) and financial industry.  Not surprisingly, theoretical investigations of EGARCH has attracted constant attention, see \citet{HCTM2002}, \citet{H2010} and \citet{RR2009}. However, the validity of the estimation procedures used empirically in \cite{Nelson1991} was not proved. Our study provides the first satisfactory answer to this open question for non linear volatility models including the EGARCH(1,1) model. We give sufficient conditions for the estimator to be strongly consistent and asymptotically normal.  Our approach is based on the natural notion of continuous invertibility that we introduce in a very general setting and thus will be applied in other models in future works.\\

Consider a general volatility model of the form $ X_{t} =\Sigma_{t}^{1/2}\cdot Z_{t}$ where $\Sigma_{t}$ is the volatility and where the innovations $Z_t$ are normalized, centered independent identical distributed (iid) random vectors. The natural filtration $ \cF_{t} $ is generated by the past innovations $(Z_{t},Z_{t-1},\ldots)$. It is assumed that a transformation of the volatility satisfies some (possibly non-linear) SRE, i.e. there exist a function $h$  and some $\cF_{t-1}$ measurable random function $\psi_{t}$ such that the following relation \begin{equation}\label{eq:gmod}
(h(\Sigma_{k}))_{k\leq t}  = \psi_{t}((h(\Sigma_{k}))_{k\leq t-1},\theta_{0})
\end{equation} holds.  It is the case of all classical models of GARCH and EGARCH types, extensions of the  simplest GARCH(1,1) and EGARCH(1,1) univariate models. Let us illustrate our propose in this introduction on these  models (in the univariate case, the volatility is denoted $\sigma^2_t$):
\begin{align}
\mbox{GARCH(1,1):}& \qquad\sigma_t^2=\alpha_0+\beta_0 \sigma_{t-1}^2+\gamma_0 X_{t-1}^2,\label{eq:garch}\\
\mbox{EGARCH(1,1):}&\qquad  \log(\sigma_t^2)=\alpha_0+\beta_0 \log(\sigma_{t-1}^2)+(\gamma_0 Z_{t-1}+\delta_0|Z_{t-1}|).\label{eq:egarch}
\end{align}
One can rewrite \eqref{eq:garch} as an SRE driven by the innovations: $\sigma_t^2=\alpha_0+(\beta_0+\gamma_0Z_{t-1}^2) \sigma_{t-1}^2$, i.e. $\psi_t(x,\theta)=\alpha +(\beta +\gamma Z_{t-1}^2) x$. This SRE  is used by \cite{Nelson1990} to obtain the Lyapunov condition $\E[\log(\beta_0+\gamma_0 Z_0^2)]<0$, necessary and sufficient for the stationarity. In general, the functional process $(\psi_t)$ driving the SRE \eqref{eq:gmod} is assumed to be a stationary ergodic process of Lipschitz functions. Such a SRE is said to be convergent when its solution is unique, non anticipative (i.e. function of $\cF_{t}$ at any time $t$) and its law does not depend on the initial values. This last property, also called the "stability" of the SRE, ensures  the existence of the stationary process $(X_t)$. It coincides with the ergodicity for Markov chains. Sufficient conditions   (also necessary in the linear Markov case)  for the convergence are the negativity of a Lyapunov coefficient and the existence of logarithmic moments, see \citet{Elton1990}, \citet{Bougerol1992} and \citet{Bougerol1993}.\\

Assume that the model have a non anticipative, stationary solution with invertible volatility matrices $\Sigma_t$. Using the relation $Z_t=\Sigma_t^{-1}\cdot X_t$ in $\psi_t$, it is possible to study a new SRE driven by the observations $X_t$
\begin{equation}\label{SREinftrue}
(h(\Sigma_k))_{k\le t}=\phi_t((h(\Sigma_k))_{k\le t-1},\theta_0).
\end{equation}
Here $\phi_t$ is an ergodic stationary process  generated by $(\mathcal G_{t-1})$, the sigma-field of the past values $(X_{t-1},X_{t-2},\cdots)$. The convergence of this new SRE is closely related with the notion of invertibility (i.e. the existence of a non linear AR representation of the observations)  see \citet{Granger1978}, \citet{Tong1993}, \citet{Straumann2005} and \citet{SM2006}. As previously, sufficient conditions for the convergence are the negativity of a Lyapunov coefficient and the existence of logarithmic moments; These conditions are expressed this time on $\phi_1$ and not on $\psi_1$. For instance, we obtain from \eqref{eq:garch} that $\phi_t(x,\theta )=\alpha+\beta x+\gamma X_{t-1}^2$ and the   GARCH(1,1) model is invertible as soon as $0\le \beta_0<1$. For the EGARCH(1,1) model, we obtain from \eqref{eq:egarch} that
\begin{equation}\label{eq:egarchinv}
\phi_t(x,\theta)=\alpha+\beta x+(\gamma  X_{t-1}+\delta |X_{t-1}|)\exp(-x/2).
\end{equation}
This SRE is less stable than in the GARCH(1,1) model because of the exponential function that explodes for large values. However, a sufficient condition for invertibility is given in \citet{Straumann2005} under restrictions on the parameters $(\alpha_0,\beta_0,\gamma_0,\delta_0)$, see  \eqref{eq:condtheta2} below.\\

In practice the parameter $\theta_0$ is unknown and is estimated using the SRE as follow. An approximation of the SRE $(h(g_k))_{k\le t}=\phi_t((h(g_k))_{k\le t-1},\theta)$ generates recursively a forecast $\hat g_t(\theta)$ of  the volatility $\Sigma_t$ using only the past observations and the  model at the point $\theta$. Using Quasi-LIKelihood (QLIK) criteria to quantify the error of the volatility forecasting:
\begin{equation}
 n\hat{S}_{n}(\theta)  =  \sum_{t=1}^{n}\hat{s}_{t}(\theta) 
  =  \sum_{t=1}^{n}2^{-1}\left(X_{t}^{T}\ell(\hat{g}_{t}(\theta))^{-1}X_{t}+\log(\det(\ell(\hat{g}_{t}(\theta)))\right)
\label{eq:hat_L}
\end{equation}
the best forecast corresponds to the M-estimator $\hat{\theta}_{n}$ satisfying
\begin{equation}
\hat{\theta}_{n}=\mbox{argmin}_{\theta\in\Theta}\hat{S}_{n}(\theta).
\label{eq:QMLE_hat_theta}
\end{equation}
Under conditions ensuring the asymptotic regularity of $\hat S_n(\cdot)$, $\hat\theta_n$ is a good candidate for estimating $\theta_0$. By construction, the asymptotic regularity  of $\hat S_n(\cdot)$ depends on the regularity of $g_t(\cdot)$ and on the stability of the corresponding SRE. As $\theta_0$ is unknown, \citet{Straumann2005} imposed the uniform invertibility over the compact set $\Theta$: The SRE is assumed to be stable in the Banach space of continuous function on $\Theta$ (with the sup-norm). The consistency and the asymptotic normality of $\hat\theta_n$ follows, see \cite{Straumann2005} and \cite{SM2006}. However, the notion of uniform invertibility is too restrictive; the asymptotic normality of the EGARCH(1,1) model is proved in the degenerate case $\beta_0=0$ only; there $\log(\sigma_t^2)=\alpha_0+(\gamma_0 Z_{t-1}+\delta_0|Z_{t-1}|)$ which is not realistic.\\

In this paper, we introduce the  notion of continuous  invertibility  and apply it successfully to the EGARCH($1,1$) model: the asymptotic normality of the procedure described above and used since \cite{Nelson1990} is proved for the first time. The notion of continuous invertibility is a very natural one: the SRE is assumed to be stable at each point $\theta\in\Theta$ and the functional solution $(g_t(\cdot))$ is assumed to be continuous. By definition, uniform invertibility implies the weaker notion of continuous invertibility. We give sufficient conditions {\bf (CI)} for continuous invertibility and check it on volatility model that are non necessarily uniformly invertible. We prove under {\bf (CI)} and the identifiability of the model that $\hat\theta_n$ is a strongly consistent estimator of $\theta_0$  and that the natural forecast $\hat g_t(\hat\theta_t)$ of the volatility $\Sigma_t$ is also strongly consistent. We prove that  $\hat\theta_n$ is  asymptotically normal if moreover the limiting variance exists. The proofs are very general and also valid in the multidimensional case. In absence of uniform invertibility, we use under {\bf (CI)} new arguments following the ones of \citet{Jeantheau1993} for the strong consistency and of \citet{BW2009} for the asymptotic normality. One crucial step in the proof is Theorem \ref{prop:log} below which asserts the logarithmic moments properties of solutions of SRE. This results, by its generality, is of independent interest for the study of  probabilistic properties of  solutions of SRE (also called Iterated Random Functions).\\

The  commonly used statistical procedure described above is only valid under {\bf (CI)}. Fortunately condition {\bf (CI)} is automatically satisfied for all invertible GARCH and EGARCH type models known by the authors. The GARCH(1,1) model satisfied {\bf (CI)} on all compact sets of $ [0,\infty[^3$ satisfying  $\beta<1$ where it is also uniformly invertible. The EGARCH(1,1) model satisfies {\bf (CI)} on all compact sets  of points that satisfy the invertibility condition \eqref{eq:condtheta}. It is not uniformly invertible there but the statistical inference is still valid. Applying our approach to other models, we recover the results of  \citet{BHK2003} and \citet{Francq2004} for GARCH(p,q) models, we recover the results of  \citet{Francq2011} for CCC-GARCH(p,q) models  and for AGARCH(p,q) models we refine the results of \citet{SM2006}. On the contrary, it is shown in \citet{Sorokin2011}  that forecasting the volatility with SRE may be inconsistent when the SRE is unstable. Thus, if the model is not continuously invertible on $\Theta$ the minimization \eqref{eq:QMLE_hat_theta} is unstable and the estimation procedure is not valid. To sum up, one can think of the following "equivalences":

\begin{center}
\begin{tabular}{|lcl|}
\hline
 \vspace{-1mm} Stability of the SRE generated by & $\Longleftrightarrow$ & stationarity, ergodicity\\
 the innovations $(Z_t,Z_{t-1},\ldots)$ & (A) & and log-moments  \\\hline
 \vspace{-1mm} Stability of the SRE generated by & $\Longleftrightarrow$ & invertibility, forecasting\\
 the observations $(X_t,X_{t-1},\ldots)$ & (B) & and statistical inference\\
 \hline
\end{tabular}
\end{center}\

The equivalence (A) is crucial when studying existence of stationary solutions of volatility models. We want to emphasize the importance of the second equivalence (B) for  the volatility forecast and the statistical inference using the QLIK criteria. For non uniformly invertible models, it is crucial to infer the model only in the domain of invertibility otherwise the whole procedure can fail. The consequences of this work on empirical study is huge as the equivalence (B) has been negligible in most existing works, see \citet{WC} for the EGARCH(1,1) case (applications on other classical models are also in progress).  Finally, notice that the statistical inference of $\theta_0$ is possible without assuming {\bf (CI)}: it has been  done by \citet{Zaffaroni2009} using  Whittle's estimator.\\

An outline of the paper can be given as follows. In Section \ref{sec:ci}, we discuss the standard notions of invertibility and introduce the  continuous invertibility and its sufficient condition {\bf (CI)}. In Section \ref{sec:inf} our main results on the statistical inference based on the SRE are stated. We apply this results in some GARCH type models and in the EGARCH(1,1) model in Section \ref{sec:egarch}. The Appendix contains the technical computation of the necessary and sufficient condition for the existence of the asymptotic variance of $\hat\theta_n$ in the EGARCH(1,1) model.
 
\section{Continuously invertible volatility models}\label{sec:ci}
\subsection{The general volatility  model}
In  this paper,  $(Z_t)$ is a stationary  ergodic sequence of real vectors called the innovations. Let us denote $\mathcal F_t$ the filtration
generated by $(Z_t,Z_{t-1},\ldots)$. Consider the general volatility model $
X_{t} = \Sigma_{t}^{1/2}\cdot Z_{t}$ where \eqref{eq:gmod} is satisfied: $
(h(\Sigma_{k}))_{k\leq t}  = \psi_{t}((h(\Sigma_{k}))_{k\leq t-1},\theta_{0})$. The function $h$ is injective from the space of real matrices of size $k\times k$ to an auxiliary separable metric space $F$. The random function $\psi_{t}(\cdot,\theta_0)$ 
is a $\cF_{t-1}$ adapted random function from the space of the sequences of elements in the image of $h$ to itself. Let us denote $\ell$ the inverse of $h$ (from the image of $h$ to the space of real matrices of size $k\times k$) and call it the link function.

\subsection{Convergent SRE and stationarity}

A first question regarding this very general model is wether or not a stationary solution exists. 
As the sequence of the transformed volatilities $(h(\Sigma_k ))_{k\le t}$ is a solution of a fixed point problem, 
we recall the  following result due to \citet{Elton1990} and \citet{Bougerol1993}. Let $(E,d)$ be a complete separable metric space. 
A map $f: E\to E$ is a Lipschitz map if
$\Lambda(f)=\sup_{(x,y)\in E^2}d(f(x),f(y))/d(x,y)$
is finite. For any sequence of random element in $(E,d)$,  $(X_{t})$ is said to be exponential almost sure convergence to 0  
$X_{t}\xrightarrow{\mbox{e.a.s.}}0$ as $t\to \infty$ if for $X_{t} =o(e^{-Ct})$ a.s. for some $C>0$. 

\begin{Theo}\label{thm:Sol of SRE}\label{th:bo}
Let $(\Psi_{t})$ be a stationary ergodic sequence of Lipschitz maps from $E$ to $E$. Suppose that 
$\E[\log^+(d(\Psi_0(x),x))]<\infty$ for some $x\in E$, that $\E[\log^{+}\Lambda(\Psi_{0})]<\infty$ and that for some 
integer $r\geq1,$
$$
\E[\log\Lambda(\Psi_{0}^{(r)})]=\E[\log\Lambda(\Psi_{0}\circ\cdots\circ\Psi_{-r+1})]<0.
$$
Then the SRE $
X_{t}=\Psi_{t}(X_{t-1})$ for all  $t\in\Z$ is convergent: it admits a unique stationary solution $(Y_{t})_{t\in\Z}$ which is
ergodic and for any $y\in E$
$$
Y_{t}=\lim_{m\rightarrow\infty}\Psi_{t}\circ\cdots\circ\Psi_{t-m}(y),\quad t\in\Z.
$$
The 
$Y_{t}$ are measurable with respect to the $\sigma(\Psi_{t-k}, k\geq0)$ and
$$
d(\tilde{Y}_{t},Y_{t})\xrightarrow{\mbox{e.a.s.}}0,\quad t\rightarrow\infty
$$
 such that $\tilde{Y}_{t}=\Psi_t(\tilde{Y}_{t-1})$ for all $t>0$.
\end{Theo}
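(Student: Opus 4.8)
The plan is to follow the classical contraction argument of \citet{Elton1990} and \citet{Bougerol1993}. First I would pin down a negative exponential rate. Writing $a_n=\log\Lambda(\Psi_0\circ\cdots\circ\Psi_{-n+1})$ and using $\Lambda(f\circ g)\le\Lambda(f)\Lambda(g)$, the sequence $(a_n)$ is subadditive relative to the ergodic shift and satisfies $\E[a_1^+]=\E[\log^+\Lambda(\Psi_0)]<\infty$, so Kingman's subadditive ergodic theorem yields $a_n/n\to\gamma$ a.s. with $\gamma=\inf_n n^{-1}\E[a_n]\in[-\infty,\infty)$. Since $n\mapsto\E[a_n]$ is itself subadditive by stationarity and $\E[a_r]<0$ is assumed, $\gamma\le r^{-1}\E[a_r]<0$. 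The same reasoning applied to the forward compositions $\Psi_t\circ\cdots\circ\Psi_1$ gives $t^{-1}\log\Lambda(\Psi_t\circ\cdots\circ\Psi_1)\to\gamma$ a.s. as well, since these have the same expectations.

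Next I would show the backward iterates converge exponentially fast. Fix $y\in E$ and set $W_m^{(t)}=\Psi_t\circ\cdots\circ\Psi_{t-m}(y)$; the Lipschitz bound gives $d(W_{m+1}^{(t)},W_m^{(t)})\le\Lambda(\Psi_t\circ\cdots\circ\Psi_{t-m})\,d(\Psi_{t-m-1}(y),y)$. The first factor is $e^{(\gamma+o(1))m}$ a.s. by the previous step. For the second, the triangle inequality combined with Lipschitzness of $\Psi_0$ gives $\log^+d(\Psi_0(y),y)\le\log^+\Lambda(\Psi_0)+\log^+d(\Psi_0(x),x)+C(x,y)$, which is integrable (here the hypothesis $\E[\log^+d(\Psi_0(x),x)]<\infty$ at the single point $x$ is used); hence $(\log^+d(\Psi_{t-m-1}(y),y))_m$ is stationary with finite mean and $m^{-1}\log^+d(\Psi_{t-m-1}(y),y)\to0$ a.s. Choosing $\varepsilon\in(0,-\gamma/2)$, the bound $d(W_{m+1}^{(t)},W_m^{(t)})\le e^{(\gamma+\varepsilon)m}$ holds for all large $m$; this is summable, so by completeness of $E$ the limit $Y_t=\lim_mW_m^{(t)}$ exists a.s. with $d(W_m^{(t)},Y_t)=O(e^{(\gamma+\varepsilon)m})$, which is the claimed e.a.s. convergence in $m$. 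The same contraction shows $Y_t$ does not depend on $y$; it is $\sigma(\Psi_{t-k},k\ge0)$-measurable as an a.s. limit of measurable maps; the identity $W_m^{(t)}=\Psi_t(W_{m-1}^{(t-1)})$ together with continuity of $\Psi_t$ passes to the limit to give $Y_t=\Psi_t(Y_{t-1})$; and since $Y_t=\Phi((\Psi_{t-k})_{k\ge0})$ for a fixed measurable $\Phi$, the process $(Y_t)$ is stationary and ergodic because $(\Psi_t)$ is.

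For uniqueness, let $(Y_t')$ be any stationary solution and put $\delta_m=d(Y_{-m},Y_{-m}')$, a stationary sequence of a.s. finite variables. Iterating the SRE, $\delta_0\le\Lambda(\Psi_0\circ\cdots\circ\Psi_{-m+1})\,\delta_m=e^{a_m}\delta_m$, so on $\{\delta_0>0\}$ we would have $m^{-1}\log\delta_m\ge m^{-1}\log\delta_0-m^{-1}a_m\to-\gamma>0$, i.e. $\delta_m\to\infty$; Fatou's lemma then yields $\P(\delta_0>M)=\P(\delta_m>M)\ge\P(\delta_0>0)$ for every $M>0$, and letting $M\to\infty$ (using $\delta_0<\infty$ a.s.) forces $\P(\delta_0>0)=0$, that is $Y'=Y$. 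Finally, for the stability statement, if $\tilde Y_t=\Psi_t(\tilde Y_{t-1})$ for $t>0$ then $d(\tilde Y_t,Y_t)\le\Lambda(\Psi_t\circ\cdots\circ\Psi_1)\,d(\tilde Y_0,Y_0)$, and the forward rate gives $d(\tilde Y_t,Y_t)=o(e^{-Ct})$ a.s. with $C=-\gamma/2>0$.

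I expect the main obstacle to be the two rate arguments in the first and third steps: upgrading the one-index hypothesis $\E[\log\Lambda(\Psi_0^{(r)})]<0$ to a genuinely negative Lyapunov exponent through subadditivity of $n\mapsto\E[a_n]$, and carefully checking that all the $o(e^{\varepsilon m})$ remainders (coming from the stationary ergodic theorems applied to $\log^+d(\Psi_0(y),y)$) are dominated by the exponential contraction. The uniqueness step is also somewhat delicate, because no moment of $\log^+d(Y_0,Y_0')$ is available, so the contradiction has to be extracted from stationarity alone via Fatou rather than from a Borel--Cantelli estimate.
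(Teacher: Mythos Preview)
The paper does not prove this theorem at all: it is stated as a recalled result, attributed to \citet{Elton1990} and \citet{Bougerol1993}, and used as a black box throughout. Your proposal is a correct outline of the standard proof from those references---Kingman's subadditive ergodic theorem to extract the negative Lyapunov exponent, geometric Cauchy increments for the backward iterates via the $e^{(\gamma+o(1))m}$ rate combined with the $o(m)$ growth of $\log^+d(\Psi_{t-m-1}(y),y)$, and the stationarity/Fatou contradiction for uniqueness---so there is nothing to compare on the paper's side.

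One small remark: your forward-rate step (``the same reasoning applied to the forward compositions'') is correct but terse. The forward sequence $b_t=\log\Lambda(\Psi_t\circ\cdots\circ\Psi_1)$ is subadditive with respect to the \emph{forward} shift, $b_{t+s}\le b_t+b_s\circ T^t$, and by stationarity $\E[b_t]=\E[a_t]$, so Kingman indeed gives the same limit $\gamma$; it is worth saying explicitly that this is a second application of Kingman rather than a consequence of the first. Likewise, the claim $m^{-1}\log^+d(\Psi_{t-m-1}(y),y)\to0$ a.s.\ follows from the first Borel--Cantelli lemma via $\sum_m\P(\log^+d(\Psi_0(y),y)>\varepsilon m)<\infty$ (integrability of $\log^+d(\Psi_0(y),y)$), not from Birkhoff directly; you have the right conclusion but may want to name the mechanism. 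These are presentation points, not gaps.
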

The sufficient Lyapunov assumptions $
\E[\log\Lambda(\Psi_{0}^{(r)})] <0$ is also necessary in the linear case, see \citet{Bougerol1992}.\\

The results of Theorem \ref{th:bo} does not guaranty any moment property on the stationary solution.   The following result ensures the existence of logarithmic moments of the stationary solution under natural assumptions. It extends the result of \citet{Alsmeyer2001} to a much more general context.
\begin{Theo}\label{prop:log}
Under the assumptions of Theorem \eqref{th:bo} and  $\E[(\log^+d(\Psi_0(x),x))^2]<\infty$ the unique stationary solution satisfies $\E[\log^+(d(Y_0,y))]<\infty$ for all $y\in E$.
\end{Theo}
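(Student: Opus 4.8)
The plan is to control the stationary solution $Y_0$ by comparing it to the forward iterates $\Psi_0\circ\cdots\circ\Psi_{-m}(y)$ and exploiting the contraction that already holds on average. Fix $y\in E$. By the triangle inequality and Theorem~\ref{th:bo},
$$
\log^+ d(Y_0,y)\le \log^+\Big(d(Y_0,\Psi_0^{(m+1)}(y))+d(\Psi_0^{(m+1)}(y),y)\Big)
\le \log 2+\log^+ d(Y_0,\Psi_0^{(m+1)}(y))+\log^+ d(\Psi_0^{(m+1)}(y),y),
$$
and the first error term tends to $0$ e.a.s. in $m$, so it suffices to obtain a uniform-in-$m$ bound on $\E[\log^+ d(\Psi_0^{(m+1)}(y),y)]$ and then pass to the limit (by Fatou, say, after first replacing $\log^+d(Y_0,y)$ by $\log^+ d(\Psi_0^{(m+1)}(y),y)$ up to an e.a.s.\ term that is a.s.\ bounded). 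Along the telescoping chain $\Psi_0^{(m+1)}(y)=\Psi_0\circ\cdots\circ\Psi_{-m}(y)$ one writes, with $x$ the point from the hypothesis $\E[(\log^+ d(\Psi_0(x),x))^2]<\infty$,
$$
d(\Psi_0^{(m+1)}(y),y)\le d(\Psi_0^{(m+1)}(y),\Psi_0^{(m+1)}(x))+\sum_{j=0}^{m} d(\Psi_0^{(j)}\circ\Psi_{-j}(x),\Psi_0^{(j)}(x))+d(x,y),
$$
so that $d(\Psi_0^{(m+1)}(y),y)\le \Lambda(\Psi_0^{(m+1)})\,d(x,y)+\sum_{j=0}^m \Lambda(\Psi_0^{(j)})\,d(\Psi_{-j}(x),x)+d(x,y)$, using $\Lambda(f\circ g)\le\Lambda(f)\Lambda(g)$ and that $\Psi_0^{(0)}=\mathrm{id}$ has Lipschitz constant $1$.

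The next step is to take $\log^+$ of this sum and bound $\E[\log^+(\cdot)]$. Writing $L_j:=\log\Lambda(\Psi_0^{(j)})$ and $U_j:=\log^+ d(\Psi_{-j}(x),x)$, the generic term is of the form $e^{L_j+U_j}$ (up to the additive $\log^+d(x,y)$, a constant). The key subexponential estimate is $\log^+\big(\sum_{j} e^{a_j}\big)\le \max_j a_j^+ + \log(m+2)\le \sum_j a_j^+ +\log(m+2)$, but to get a bound uniform in $m$ one must instead use the decay of $\E[L_j]$. By subadditivity of $j\mapsto\log\Lambda(\Psi_0^{(j)})$ together with the Lyapunov hypothesis (applied to blocks of length $r$) and Kingman's theorem, $\E[L_j]\le -cj$ for some $c>0$ and all large $j$; moreover $\E[L_j^+]\le j\,\E[\log^+\Lambda(\Psi_0)]$ is finite. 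So $L_j+U_j\le L_j^+ + U_j$ and one estimates
$$
\E\Big[\log^+\!\sum_{j=0}^m e^{L_j+U_j}\Big]\le \E\Big[\Big(\max_{0\le j\le m}(L_j+U_j)\Big)^+\Big]+\log(m+2),
$$
and the point is that $\max_j (L_j+U_j)$ is tight as $m\to\infty$: since $L_j$ drifts to $-\infty$ linearly while $U_j$ (being a $\log^+$ of a fixed-law quantity with a \emph{second} moment) satisfies $U_j/j\to 0$ a.s.\ and in fact $\sup_j (U_j-\epsilon j)<\infty$ a.s., the sum $L_j+U_j$ is eventually negative uniformly in $j$, so $\max_{0\le j\le m}(L_j+U_j)^+$ converges a.s.\ to a finite random variable. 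Here the second-moment assumption is exactly what upgrades the a.s.\ finiteness of the maximum to an $L^1$ bound: one can write, for the random time $T$ after which $L_j+U_j<0$, a crude bound $\max_j(L_j+U_j)^+\le \sum_{j\le T}(L_j+U_j)^+$ and control $\E[T]$ (or directly $\E[\sum_{j\le T} U_j]$) by a Wald-type / Borel–Cantelli argument using $\P(U_j>\epsilon j)\le \E[U_0^2]/(\epsilon j)^2$, which is summable. This mirrors the mechanism in \citet{Alsmeyer2001}.

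The main obstacle is precisely this last maximal/moment estimate: converting ``$L_j+U_j\to-\infty$ a.s.'' into a genuine $L^1$ bound on $\sup_m \log^+\sum_{j\le m} e^{L_j+U_j}$, uniformly in $m$, which is where the hypothesis $\E[(\log^+ d(\Psi_0(x),x))^2]<\infty$ (rather than just a first moment) is indispensable — a first moment would not make $\sum_j \P(U_j>\epsilon j)$ summable. I would carry it out by: (i) fixing $\epsilon\in(0,c)$ and defining $A_j=\{U_j>\epsilon j\}$, using Chebyshev and the second moment to get $\sum_j\P(A_j)<\infty$; (ii) on $A_j^c$ eventually, $L_j+U_j\le \E[L_j]+(L_j-\E[L_j])+\epsilon j$, and the recentered $L_j-\E[L_j]$ is itself $o(j)$ by Kingman, so $L_j+U_j<0$ past a finite random index $N$ with $\E[N]<\infty$; (iii) bound $\log^+\sum_{j\le m}e^{L_j+U_j}\le \log(N+2)+\sum_{j\le N}(L_j+U_j)^+$ and take expectations, using Wald's identity (the summands have finite mean and $N$ is a stopping-like index) to conclude finiteness. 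Finally, combining with the first display and Fatou yields $\E[\log^+ d(Y_0,y)]<\infty$ for the arbitrary $y$, which is the claim. $\square$
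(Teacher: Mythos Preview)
Your overall strategy---telescope $d(\Psi_0^{(m+1)}(y),y)$ into $\sum_j \Lambda(\Psi_0^{(j)})\,d(\Psi_{-j}(x),x)$, take $\log^+$, and control the result via the linear drift of $L_j$ and the second moment of $U_j$---is plausible in spirit, but the concluding steps contain real gaps.

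First, the bound in (iii), $\log^+\sum_{j\le m}e^{L_j+U_j}\le \log(N+2)+\sum_{j\le N}(L_j+U_j)^+$, does not follow from ``$L_j+U_j<0$ for $j>N$'': the tail $\sum_{N<j\le m}e^{L_j+U_j}$ can then be as large as $m-N$, so the right side still grows with $m$. You would need the stronger $L_j+U_j\le-\delta j$ for $j>N$ to make the tail geometric. Second, and more seriously, the claim $\E[N]<\infty$ is unjustified and generally false under the stated hypotheses. For the Borel--Cantelli index $N_1=\sup\{j:U_j>\epsilon j\}$, Chebyshev gives $\P(N_1\ge k)\le\sum_{j\ge k}\E[U_0^2]/(\epsilon j)^2\asymp C/k$, so $\E[N_1]=\sum_k\P(N_1\ge k)$ diverges (only $\E[\log N_1]$ is finite). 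For the Kingman index, the subadditive ergodic theorem gives $L_j/j\to-\gamma$ a.s.\ but no rate, and the hypothesis $\E[\log^+\Lambda(\Psi_0)]<\infty$ is not enough to bound any moment of the last time $L_j>-\gamma j/2$. Third, the appeal to ``Wald's identity'' is inapplicable: the $(L_j+U_j)^+$ are neither i.i.d.\ nor identically distributed (the law of $L_j$ depends on $j$), and $N$ is not a stopping time for any filtration in which they are adapted with the needed independence.

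The paper avoids all of this by working additively from the start: the elementary inequality $\log(1+a+b)\le\log(1+a)+\log(1+b)$ turns the triangle inequality into the recursion $v_m\le v_{m-1}+\log\bigl(1+\Lambda(\Psi^{(1-m)})d(\Psi_{-m}(x),x)\bigr)$, so no $\log^+$ of a growing sum is ever taken. After the random time $M$ past which $\Lambda(\Psi^{(-m)})\le\rho^m$, this yields a series whose integrability is reduced to showing $\sum_{j\ge1}\E[\log(1+\rho^{j-1}d(\Psi_0(x),x))]<\infty$; the second-moment hypothesis enters only here, via the integral comparison $\int_0^1 u^{-1}\E[\log(1+u\,d(\Psi_0(x),x))]\,du<\infty$. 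No moment of a random index and no Wald-type device is needed.
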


\begin{proof}
The basic inequality $\log(1+y+z)\le\log(1+y)+\log(1+z)$ will be used several time. Notice that for any r.v. $X\ge 0$ we have the equivalence $\E[\log(1+X)]<\infty$ iff $\E[\log^+(X)]<\infty$. Thus $\E[\log(1+d(Y_0,y))]<\infty$ for all $y\in E$ iff $\E[\log(1+d(Y_0,x))]<\infty$ since $d(Y_0 ,y))\le  d(Y_0,x))+d(x,y)$. We denote $\Psi^{(-m)}=\Psi_0\circ\cdots\circ\Psi_{-m}(x)$, $w=d(y, \Psi^{(1-m)}(x))\ge 0$ and $z=\Lambda( \Psi^{(1-m)})d(\Psi_{-m}(x),x)\ge 0$. From the triangular inequality $d(x,\Psi^{(-m)}(x))\le w+z$ we obtain that 
\begin{align}
\nonumber\log(1+d(x,\Psi^{(-m)}(x)))\le \log(1+w+z)\le \log(1+ w) + \log(1+z).\label{eq:rec}
\end{align}
Eqn. 27 in \cite{Bougerol1992} asserts the existence of $0<\rho<1$ and  $\epsilon>0$ satisfying
$$
\overline\lim_{m\to\infty} \frac1m \log(\Lambda(\Psi^{(-m)}))\le \log(\rho)-\epsilon\qquad a.s.
$$
Thus there exists a r.v. $M\in \N^\ast$ such that $ \Lambda(\Psi^{(-m)})\le \rho^m$ for all $m\ge M$. 
Writing $v_m= \log(1+d(x, \Psi^{(-m)}(x)))$, for all $m\ge M$ we have:
$$
v_m\le v_{m-1}+ \log(1+ \rho^{m-1}d(\Psi_{-m}(x),x)).
$$
A straightforward recurrence leads to the following upper bound of all $(v_m)_{m\ge M}$
$$
v_m\le v_M+ \sum_{j=1}^{\infty}  \log(1+\rho^{j-1}d(\Psi_{-j-M}(x),x)).
$$
As $\log(1+d(x,Y_0))=\lim_{m\to\infty}v_m$ a.s., it remains to prove that the series in the upper bound is summable and that the upper bound is integrable to conclude by the  dominated convergence Theorem. Using the stationarity of $(v_m)$, we know that $\E[\log(1+\rho^{j-1}d(\Psi_{-j-M}(x),x))]=\E[\log(1+\rho^{j-1}d(\Psi_{0}(x),x))]$ does not depend on $M$ and $j$. Thus $\E[ v_M]=\E[\log(1+d(x,\Psi_0(x)))]<\infty$ by assumption. We conclude the proof by comparing the series with an integral:
\begin{align*}
\sum_{j\ge 1}\E[\log(1+\rho^{j-1}d(\Psi_{0}(x),x))]&\le \frac1{1-\rho}\int_0^1\frac{\E[\log(1+ud(\Psi_{0}(x),x))]}{u}du.\end{align*} 
Let us prove that his integral converges as soon as $\E[(\log^+d(\Psi_0(x),x))^2]<\infty$. Using that $\E[\log(1+ud(\Psi_{0}(x),x))]=\int_0^\infty \P(\log(1+ud(\Psi_{0}(x),x))\ge t) dt$ and denoting $v=(e^t-1)/u$ the integral becomes:
\begin{align*}\int_0^1\int_0^\infty\frac{\P(\log(1+ud(\Psi_{0}(x),x))\ge t)}{u}dtdu&=\int_0^1\int_0^\infty\frac{\P( d(\Psi_{0}(x),x)\ge v)}{1+uv}dvdu.
\end{align*}
Using Fubini's theorem and $\int_0^1(1+uv)^{-1}du=\log(v+1)/v$ for all $v\ge 0$ we get an upper bound in term of
\begin{align*} \int_0^\infty\frac{\log(1+v)\P( d(\Psi_{0}(x),x)\ge v)}{v}dv.
\end{align*}
This integral converges in $+\infty$ as 
\begin{align*} \int_0^\infty\frac{\log(1+v)\P( d(\Psi_{0}(x),x)\ge v)}{1+v}dv=\E[\log(1+d(x,\Psi(x))^2]
\end{align*}
and the desired result follows.\end{proof}

In order to apply Theorem \ref{thm:Sol of SRE} in our case, let us  denote by $E$  the separable metric space of the sequences of elements in the image of $h$. Equipped with the metric $\sum_{j\ge 1}2^{-j} d(x_j,y_j)/(1+d(x_j,y_j))$, the space $E$ is  complete. A sufficient condition for stationarity of $(X_t)$ is that the SRE driven by $(\psi_t)$ converges in $E$. It simply expresses as the Lyapunov condition $\E[\log\Lambda(\psi_{0}^{(r)})]<0$ for some integer $r\geq1$ and some logarithmic moments. This assumption of stationarity is sufficient but not optimal in many cases:
\begin{rem}\label{rem:dim}
The state space of the SRE \eqref{eq:gmod}, denoted $E$, in its most general form, is  a space of infinite sequences. However in all classical models we can find a lag $p$ such that  $(h(\Sigma_k))_{t-p+1\le k\le t}=\psi_t((h(\Sigma_k))_{t-p\le k\le t-1},\theta_0)$. The state space $E$ is now the finite product of $p$ spaces. It can be equipped by unbounded metrics such that $p^{-1}\sum_{j=1}^pd(x_j,y_j)$ or $\sqrt{\sum_{j=1}^pd^2(x_j,y_j)}$. The product metric has to be carefully chosen  as it changes the value of the Lipschitz coefficients of the $\phi_t$. Yet, even if the products spaces are embedded, the smallest possible lag $p$ in the SRE yields the sharpest Lyapunov condition. Finally, if $E$ has a finite dimension and if the condition of convergence of the SRE expresses in term of the top Lyapunov coefficient, one can choose any metric induced by any norm, see \citet{Bougerol1993} for details.
\end{rem}
In view of Remark \ref{rem:dim}, instead of choosing a specific metric space $(E,d)$ we prefer to work under the less explicit assumption
\begin{description}
\item[(ST)] The process $(X_t)$ satisfying \eqref{eq:gmod} exists. It is a stationary, non anticipative and ergodic process with finite logarithmic moments.
\end{description}
In view of Theorem \ref{prop:log}, it is reasonable to  require that the solution has finite logarithmic moments. It is   very useful when considering the invertibility of the general model, see Proposition \ref{prop:obinv} below.

\subsection{The invertibility and the observable invertibility}\label{subsec:inv}

Now that under {\bf (ST)} the process $(X_t)$ is stationary and ergodic, we investigate the question of invertibility of the general model \eqref{eq:gmod}. We want to emphasize that the classical notions of  invertibility are related with convergences of SRE and implied by Lyapunov conditions.  Following \citet{Tong1993}, we say that a volatility model is invertible if the  volatility can be expressed as a function of the past observed values:
\begin{Def}\label{def:inv}
The model is invertible if the sequence of the volatilities $(\Sigma_t )$ is adapted to the filtration 
$(\mathcal G_{t-1})$ generated by $(X_{t-1},X_{t-2},\cdots)$.
\end{Def}
It is natural to assume invertibility to be able to forecast the volatility and to ensure the so-called "predictability" of the model. This notion of invertibility is very weak and consists  in restricting the underlying filtration $(\mathcal F_{t})$ of the SRE to $(\mathcal G_{t-1})$. Indeed, under {\bf (ST)} the filtration $\mathcal G_{t}\subseteq \mathcal F_t$ is well defined. If the volatility matrices are invertible, using $Z_t=\Sigma_t^{-1}\cdot X_t$ in $\psi_t$ we can express \eqref{eq:gmod} as \eqref{SREinftrue}: $(h(\Sigma_k))_{k\le t}=\phi_t((h(\Sigma_k))_{k\le t-1},\theta_0)$, a SRE driven by the whole past of the observations. The sequence of random functions  $(\phi_t)$
is an ergodic and stationary process adapted to $(\mathcal G_{t-1})$. Using the sufficient conditions of convergence of SREs given in Theorem \ref{th:bo}, the invertibility follows  if the $\phi_t(\cdot,\theta_0)$ are Lipschitz maps such that for some $x\in E$ and $r>0$, 
\begin{eqnarray}\nonumber
\E[\log^+(d(x,\phi_0(x,\theta_0)))]<\infty ,&\hspace{-1cm}\E[\log^+ \Lambda(\phi_0(\cdot,\theta_0) )]<\infty\\
 \label{eq:inv}&\mbox{and}\quad\E[\log \Lambda(\phi_0(\cdot,\theta_0)^{(r)})]<0.
\end{eqnarray}
The Remark \ref{rem:dim} also holds for the SRE driven by $(\phi_t)$: the metric space $(E,d)$ must be chosen carefully. The conditions \eqref{eq:inv} (with the optimal metric space $(E,d)$) are called the conditions of invertibility. 
\begin{Prop}
Under {\bf (ST)} and \eqref{eq:inv}, the general model \eqref{SREinftrue} is invertible.
\end{Prop}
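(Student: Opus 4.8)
The plan is to apply Theorem \ref{th:bo} directly to the SRE \eqref{SREinftrue} driven by the functional process $(\phi_t)$, viewed as a sequence of random maps on the metric space $(E,d)$. The work reduces to checking that the three hypotheses of Theorem \ref{th:bo} are met and then translating its conclusion back into the statement of invertibility in the sense of Definition \ref{def:inv}.

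First I would note that the maps $\Psi_t := \phi_t(\cdot,\theta_0)$ form a stationary ergodic sequence: this is part of the standing setup, since under {\bf (ST)} the process $(X_t)$ is stationary and ergodic, $\mathcal G_{t-1}$ is well defined, and $\phi_t$ is obtained from $\psi_t$ by substituting $Z_t=\Sigma_t^{-1}\cdot X_t$, so $(\phi_t)$ is a measurable stationary transformation of $(X_k)_{k}$ and hence itself stationary ergodic, adapted to $(\mathcal G_{t-1})$. Second, the two logarithmic-moment conditions $\E[\log^+ d(x,\phi_0(x,\theta_0))]<\infty$ and $\E[\log^+\Lambda(\phi_0(\cdot,\theta_0))]<\infty$ together with the negative Lyapunov condition $\E[\log\Lambda(\phi_0(\cdot,\theta_0)^{(r)})]<0$ for some $r\ge1$ are exactly the hypotheses \eqref{eq:inv} that we are assuming. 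So all three hypotheses of Theorem \ref{th:bo} hold for $(\Psi_t)=(\phi_t(\cdot,\theta_0))$ on $(E,d)$.

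Third I would invoke Theorem \ref{th:bo}: the SRE $Y_t=\phi_t(Y_{t-1},\theta_0)$ is convergent, admitting a unique stationary ergodic solution $(Y_t)$ with $Y_t$ measurable with respect to $\sigma(\phi_{t-k},k\ge0)\subseteq\mathcal G_{t-1}$. It remains to identify $Y_t$ with $(h(\Sigma_k))_{k\le t}$. By construction the sequence $(h(\Sigma_k))_{k\le t}$ does satisfy $(h(\Sigma_k))_{k\le t}=\phi_t((h(\Sigma_k))_{k\le t-1},\theta_0)$, i.e. it is \emph{a} solution of the SRE, and under {\bf (ST)} it is stationary and non anticipative; by the uniqueness part of Theorem \ref{th:bo} it must coincide with $(Y_t)$. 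Hence $(h(\Sigma_k))_{k\le t}$, and in particular $h(\Sigma_t)$, is $\mathcal G_{t-1}$-measurable; applying the link function $\ell$ gives that $\Sigma_t=\ell(h(\Sigma_t))$ is $\mathcal G_{t-1}$-adapted, which is precisely invertibility in the sense of Definition \ref{def:inv}.

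The one genuinely delicate point — the main obstacle — is the measurability and well-posedness of the substitution $Z_t=\Sigma_t^{-1}\cdot X_t$ that defines $\phi_t$: this presupposes the volatility matrices $\Sigma_t$ are invertible (flagged in the text) and that the resulting $\phi_t$ is indeed a genuine random Lipschitz map on all of $E$ with the stated adaptedness, so that "$(h(\Sigma_k))_{k\le t}$ solves the SRE \eqref{SREinftrue}" is not merely formal. Once one grants, as the hypotheses \eqref{eq:inv} implicitly do, that $\phi_0(\cdot,\theta_0)$ is a bona fide Lipschitz self-map of $E$ with finite $\log^+$ Lipschitz modulus, the argument is a clean application of Theorem \ref{th:bo} plus its uniqueness clause, and the finite logarithmic moments from {\bf (ST)} are what make the $\log^+$ integrability conditions and the identification of the stationary solution legitimate.
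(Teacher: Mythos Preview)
Your proposal is correct and matches the paper's approach: the paper states this Proposition without a written proof, having already explained in the preceding paragraph that ``using the sufficient conditions of convergence of SREs given in Theorem \ref{th:bo}, the invertibility follows'' under \eqref{eq:inv}. Your argument is simply the detailed unpacking of that sentence --- apply Theorem \ref{th:bo} to $\Psi_t=\phi_t(\cdot,\theta_0)$, then use the uniqueness of the stationary solution to identify it with $(h(\Sigma_k))_{k\le t}$ and read off the $\mathcal G_{t-1}$-measurability --- which is exactly what the authors intend.
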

Another notion of invertibility is introduced in \citet{Straumann2005}. We call it observable invertibility. As $\phi_t(x,\theta)=\phi_{\theta,x}(X_t,X_{t-1},\ldots)$ where $\phi_{\theta,x}$ is measurable for any $x,\theta$, denote $\hat \phi_t (x,\theta)=\phi_{\theta,x}(X_t,\ldots,X_1,u)$ where $u$ is an  arbitrary deterministic  sequence of $E^\N$.
\begin{Def}\label{def:oinv}
The model is observably invertible if and only if  the  SRE
\begin{equation}\label{SREtrue}
(h(\hat\Sigma_k))_{k\le t}=\hat\phi_t((h(\hat\Sigma_k))_{k\le t-1},\theta_0) \qquad t\ge 1
\end{equation}
is convergent for any arbitrary initial values $h(\hat\Sigma_k))_{k\le 0}$ and such that $\|\hat\Sigma_t-\Sigma_t\|\to 0$ in probability as $t\to\infty$.
\end{Def}
Notice that in general the approximative SRE does not fit the conditions of Theorem \ref{th:bo} and  in particular $(\hat\phi_t)$ is not necessarily stationary and ergodic. However, the Proposition below gives sufficient conditions for observable invertibility. It is a very useful result for the sequel of the paper, see Remark \ref{rem:dim2}. Notice that  logarithmic moments  are assumed and Theorem \ref{prop:log} is very useful to check this condition.
\begin{Prop}\label{prop:obinv}
If {\bf (ST)} and \eqref{eq:inv} hold, if the link function $\ell$ is continuous and it exists $x\in E$ such that $d(\hat\phi_t(x),\phi_t(x))\xrightarrow{\mbox{e.a.s.}} 0$ and $\Lambda(\hat\phi_t(\cdot,\theta_0)-\phi_t(\cdot,\theta_0))\xrightarrow{\mbox{e.a.s.}} 0$ as $t\to\infty$,
then the model is observably invertible.
\end{Prop}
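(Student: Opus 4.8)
The plan is to compare the perturbed SRE \eqref{SREtrue} with the stationary solution of the unperturbed SRE \eqref{SREinftrue} and to show that the discrepancy vanishes exponentially fast almost surely. First observe that \eqref{eq:inv} is exactly the list of hypotheses of Theorem~\ref{th:bo} for the stationary ergodic sequence of Lipschitz maps $\Psi_t:=\phi_t(\cdot,\theta_0)$ on $E$; hence the SRE $Y_t=\phi_t(Y_{t-1},\theta_0)$ is convergent, with a unique stationary ergodic solution $(Y_t)$. Under {\bf (ST)} the sequence $(h(\Sigma_k))_{k\le t}$ is a stationary, ergodic, non-anticipative solution of \eqref{SREinftrue}, so by uniqueness $Y_t=(h(\Sigma_k))_{k\le t}$, the volatility $\Sigma_t=\ell(h(\Sigma_t))$ is its current coordinate, and $(Y_t)$ has finite logarithmic moments. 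Fix an arbitrary deterministic initialisation $\hat Y_0=(h(\hat\Sigma_k))_{k\le 0}$ and put $\hat Y_t=\hat\phi_t(\hat Y_{t-1},\theta_0)$, $t\ge1$. Introduce the auxiliary process $\tilde Y_t=\phi_t(\tilde Y_{t-1},\theta_0)$, $t\ge1$, started at $\tilde Y_0=\hat Y_0$; the last assertion of Theorem~\ref{th:bo} yields $d(\tilde Y_t,Y_t)\xrightarrow{\mbox{e.a.s.}}0$. It thus suffices to prove that $\delta_t:=d(\hat Y_t,\tilde Y_t)$ tends to $0$: this gives $d(\hat Y_t,Y_t)\to0$, whence $\|\hat\Sigma_t-\Sigma_t\|\to0$ by continuity of $\ell$, and the limit $(Y_t)$ being independent of $\hat Y_0$ shows that \eqref{SREtrue} is convergent, i.e.\ that the model is observably invertible.

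Telescoping through the hybrid maps $\phi_t\circ\cdots\circ\phi_{k+1}(\hat Y_k)$ (which equal $\tilde Y_t$ for $k=0$ and $\hat Y_t$ for $k=t$) and using that the $\phi_i(\cdot,\theta_0)$ are Lipschitz,
\begin{equation*}
\delta_t\le\sum_{k=1}^{t}P_{t,k}\,d\big(\hat\phi_k(\hat Y_{k-1},\theta_0),\phi_k(\hat Y_{k-1},\theta_0)\big),\qquad P_{t,k}:=\Lambda\big(\phi_t(\cdot,\theta_0)\circ\cdots\circ\phi_{k+1}(\cdot,\theta_0)\big).
\end{equation*}
The decisive point is that the propagator $P_{t,k}$ — the Lipschitz constant of the true composition, which is controlled by the Lyapunov condition of \eqref{eq:inv}, and not the product $\prod_i\Lambda(\phi_i(\cdot,\theta_0))$, which need not decay — satisfies $P_{t,k}\le\rho^{t-k}$ once $t-k$ exceeds an a.s.\ finite random threshold, for a deterministic $\rho\in(0,1)$; this is Eqn.~27 of \cite{Bougerol1992}, applied exactly as in the proof of Theorem~\ref{prop:log}. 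The source term is estimated with the hypotheses of the proposition and the triangle inequality $d(\hat Y_{k-1},x)\le\delta_{k-1}+d(\tilde Y_{k-1},x)$:
\begin{equation*}
d\big(\hat\phi_k(\hat Y_{k-1}),\phi_k(\hat Y_{k-1})\big)\le s_k+\lambda_k\,\delta_{k-1},\quad s_k:=d\big(\hat\phi_k(x),\phi_k(x)\big)+\lambda_k\,d(\tilde Y_{k-1},x),\ \ \lambda_k:=\Lambda\big(\hat\phi_k(\cdot,\theta_0)-\phi_k(\cdot,\theta_0)\big).
\end{equation*}
Here $\lambda_k\xrightarrow{\mbox{e.a.s.}}0$ by assumption, and $s_k\xrightarrow{\mbox{e.a.s.}}0$ as well: $d(\hat\phi_k(x),\phi_k(x))\xrightarrow{\mbox{e.a.s.}}0$ by assumption, while $d(\tilde Y_{k-1},x)\le d(\tilde Y_{k-1},Y_{k-1})+d(Y_{k-1},x)$ is, up to an e.a.s.\ negligible term, dominated by a stationary sequence with finite logarithmic moments (by {\bf (ST)}), so that $\lambda_k\,d(\tilde Y_{k-1},x)\xrightarrow{\mbox{e.a.s.}}0$ — this uses the elementary fact, exploited repeatedly in the proof of Theorem~\ref{prop:log}, that an e.a.s.\ vanishing sequence times a sequence with finite logarithmic moment still vanishes e.a.s.

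The only genuine obstacle is the circular appearance of $\delta_{k-1}$ in the bound on $\delta_t$, which I would resolve by an a priori boundedness argument. Combining the two displays gives $\delta_t\le b_t+\sum_{k=1}^{t}P_{t,k}\lambda_k\,\delta_{k-1}$ with $b_t:=\sum_k P_{t,k}s_k$; the geometric decay of $P_{t,k}$ together with $s_k\to0$, $\lambda_k\to0$ forces $\sup_t b_t=:C_1<\infty$ (in fact $b_t\to0$) and $c_t:=\sum_k P_{t,k}\lambda_k\to0$ a.s. Hence for $t$ beyond some a.s.\ finite $T_0$ one has $c_t\le\tfrac12$, so $\delta_t\le C_1+\tfrac12\max_{s<t}\delta_s$; plugging this into the running maximum $\Delta_T:=\max_{t\le T}\delta_t$ yields $\Delta_t\le\max(\Delta_{T_0},2C_1)<\infty$, i.e.\ $\sup_t\delta_t<\infty$ a.s. Once $(\hat Y_t)$ is known to stay at bounded distance from $(\tilde Y_t)$, the source term obeys $d(\hat\phi_k(\hat Y_{k-1}),\phi_k(\hat Y_{k-1}))\le s_k+\lambda_k\sup_j\delta_j\xrightarrow{\mbox{e.a.s.}}0$, and convolving an e.a.s.\ vanishing sequence against the geometrically decaying kernel $P_{t,k}$ preserves the e.a.s.\ rate, so $\delta_t\xrightarrow{\mbox{e.a.s.}}0$, hence $\|\hat\Sigma_t-\Sigma_t\|\to0$ a.s.\ and a fortiori in probability. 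I expect this bootstrap step — ruling out an escape of $(\hat Y_t)$ before the vanishing perturbations take effect — to be the main difficulty; everything else is a mechanical assembly of Theorem~\ref{th:bo}, the geometric contraction of composed maps, and e.a.s.\ bookkeeping of the kind already carried out for Theorem~\ref{prop:log}.
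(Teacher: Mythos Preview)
Your strategy is sound and, in fact, far more explicit than the paper's own proof, which simply invokes Theorem~2.10 of \cite{SM2006} after checking that its hypotheses {\bf S1}--{\bf S3} and the log-moment condition {\bf S2'} on the stationary solution (supplied by {\bf (ST)}) hold. Your decomposition via the auxiliary trajectory $\tilde Y_t$, the telescoping through hybrid compositions, and the bootstrap to tame the circular $\delta_{k-1}$ term are exactly the ingredients behind that cited result, so you are essentially reproducing its proof rather than taking a different route.

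There is, however, one genuine technical slip. You claim that $P_{t,k}=\Lambda(\phi_t\circ\cdots\circ\phi_{k+1})\le\rho^{t-k}$ once $t-k$ exceeds a \emph{single} a.s.\ finite threshold, citing Bougerol's Eqn.~27. That equation controls backward compositions anchored at a fixed time: it yields, for each fixed $t$, a threshold $M_t$ with $P_{t,k}\le\rho^{t-k}$ for $t-k\ge M_t$, but gives no uniformity in $t$. In the linear one-dimensional case $\phi_j(x)=a_jx$ one has $P_{t,k}=\prod_{j=k+1}^t|a_j|$ exactly, and a uniform-in-$k$ bound of this type would amount to a uniform SLLN over all windows of length $\ge M$, which fails whenever $\log|a_j|$ has nontrivial fluctuations. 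The repair is standard and is what \cite{SM2006} does: replace $P_{t,k}$ by the product of non-overlapping $r$-block Lipschitz constants $L_m=\Lambda(\phi_{mr}\circ\cdots\circ\phi_{(m-1)r+1})$, which dominates $P_{t,k}$ by submultiplicativity. Products factor exactly, $\prod_{m'+1}^{n}L_m=(\prod_1^n L_m)/(\prod_1^{m'} L_m)$, and the ergodic theorem on the partial sums $\sum_1^n\log L_m$ (which converge a.s.\ to $nE[\log\Lambda(\phi_0^{(r)})]<0$) then gives two-sided geometric control sufficient to push your convolution and bootstrap through. With this modification your argument is complete; everything else---the e.a.s.\ bookkeeping, the use of {\bf (ST)} to get $\E\log^+ d(Y_0,x)<\infty$, and the final appeal to continuity of $\ell$---is correct.
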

\begin{proof} 
One can extend the proof of Theorem 2.10 in \citet{SM2006} written for Banach spaces to the case of the complete separable metric space $(E,d)$. That $d((h(\Sigma_k))_{k\le t}),(h(\hat\Sigma_k))_{k\le t}))\xrightarrow{\mbox{e.a.s.}} 0$ follows from the proof of Theorem 2.10 in \citet{SM2006} under the following assumptions:
\begin{description}
\item[S1] $\E[\log^+(d(x,\phi_0(x,\theta_0)))]<\infty$ for some $x\in E$,
\item[S2]  $\E[\log \Lambda(\phi_0(\cdot,\theta_0) )]<\infty$ and $\E[\log \Lambda(\phi_0(\cdot,\theta_0)^{(r)})]<0$ for some $r>0$,
\item[S2'] $\E[\log^+(d(y,(h(\Sigma_k))_{k\le t}))]<\infty$ for all $y\in E$,
\item[S3] $d(\hat\phi_t(x),\phi_t(x))\xrightarrow{\mbox{e.a.s.}} 0$ and $\Lambda(\hat\phi_t(\cdot,\theta_0)-\phi_t(\cdot,\theta_0))\xrightarrow{\mbox{e.a.s.}} 0$ as $t\to\infty$.
\end{description}
The conditions {\bf S1-S2} are equivalent to the invertibility conditions \eqref{eq:inv}. {\bf S3} holds from the assumptions in Proposition \ref{prop:obinv} and {\bf S2'} follows from {\bf (ST)}. Finally, using the continuity of the projection on the first coordinate and the one of the link function $\ell$, the desired result follows.
\end{proof}

\begin{rem}\label{rem:dim2}
Classical models such that GARCH(p,q) or EGARCH(p,q) models satisfy an  SRE for finite $p$ lags  $(h(\Sigma_k))_{t-p+1\le k\le t}=\phi_t((h(\Sigma_k))_{t-p\le k\le t-1},\theta_0)$ and for some $\phi_t$ generated by only a finite of past observation $(X_{t-1},\ldots,X_{t-q})$. In this context, the approximative SRE   coincides with the initial ones, i.e. one can choose $\hat\phi_t=\phi_t$ for $t> q$. Therefore, conditions of Proposition \ref{prop:obinv} hold systematically; invertibility and observable invertibility are equivalent, i.e. they are induced by the same Lyapunov condition. As for any initial values of $\hat\phi_t$ (for $0\le t\le q$)   the conditions of Proposition \ref{prop:obinv} are satisfied, seeking simplicity we  work in the sequel with $\hat\phi_t=\phi_t$ for all $t\ge 1$.
\end{rem}

\subsection{The continuous invertibility}

Now we are interested in inferring the unknown parameter $\theta_0$. To that ends, we extend the notions of invertibility at the point $\theta_0$ to the compact set $\Theta$ used in the definition \eqref{eq:QMLE_hat_theta} of our estimator. For the sake of simplicity, we assume in all the sequel that we are in the framework of Remark \ref{rem:dim2}, i.e. $\phi_t$ is observable for $t>q$ and $\hat \phi_t=\phi_t$. For any $\theta\in\Theta$, 
let us consider from  the functional SRE of the form
\begin{equation}\label{SREgen}
(\hat{g}_{k}(\theta ))_{t-p+1\le k\le t}= \phi_t((\hat{g}_{k}(\theta ))_{t-p\le k\le t-1},\theta),\qquad\forall t\ge 1,
\end{equation}
with arbitrary initial values $(\hat{g}_{k}(\theta ))_{1-p\le k\le 0}$. If the model is invertible for all $\theta\in\Theta$, the function $\hat g_t(\cdot)$ is well defined and converges to $g_t(\cdot)$, the unique stationary solution of \eqref{SREgen}. For statistical inference, the regularity of $g_t(\cdot)$ is required. We call uniform integrability the notion used in \cite{Straumann2005} and \citet{SM2006}:
\begin{Def}
The model is uniformly invertible on $\Theta$ if and only if the SRE \eqref{SREgen} is convergent when considering it on the Banach space of continuous functions $C(\Theta)$.
\end{Def}
This notion is too restrictive and we introduce the weaker notion of continuous invertibility as follows
\begin{Def}
The model is continuously invertible on $\Theta$ if and only if the SRE \eqref{SREgen} is convergent for all $\theta\in\Theta$ and the stationary solution $g_t(\cdot)$ is continuous.
\end{Def}
We have seen that sufficient conditions for pointwise invertibility are classically expressed in term of Lyapunov conditions. Let us consider models with parametric functions having continuous Lipschitz 
coefficients:

\begin{description}
\item[(CL)] For any metric spaces $\mathcal X$, $\mathcal Y$ and $\mathcal Z$, a function 
$f: \mathcal X\times \mathcal Y\mapsto \mathcal Z$ satisfies ${\bf (CL)}$ if there exists a continuous function 
$\Lambda_f: \mathcal Y\mapsto \R^+$ such that $\Lambda( f(\cdot,y))\le \Lambda_f(y)$ for all $y\in\mathcal Y$.
\end{description}
In this context, the uniform invertibility holds under the Lyapunov condition $\E[\log \sup_\Theta \Lambda_{\phi_0}^{(r)}(\theta)]<0$. This condition is too restrictive to handle EGARCH type models because of the supremum inside the expectation. We introduce a sufficient  condition {\bf (CI)} for continuous invertibility in term of a weaker Lyapounov condition than the preceding one:
\begin{description}
\item[(CI)] Assume that the SRE \eqref{SREgen} holds with  $\phi_t$ satisfying 
{\bf (CL)} for stationary $(\Lambda_{\phi_t})$ such that  conditions there exists an positve integr  $r$ such that $\E[\log \Lambda_{\phi_0}^{(r)}(\theta)]<0$  on the compact set $\Theta$. Assume moreover that $\E[\sup_\Theta \log^+  \Lambda_{\phi_0}^{(r)}(\theta)]<\infty$  and that there exists $y\in E$ such that $\E[\sup_\Theta\log^+(d(\phi_0(y,\theta),y))]<\infty$.
\end{description}
The condition {\bf (CI)} \label{con:CI}implies the convergence of the SRE \eqref{SREgen} for all $\theta\in {\Theta}$. If $\theta_0\in\Theta$ it implies the invertibility of the model as described in Subsection \ref{subsec:inv}. It also implies the local uniform regularity of $g_t(\cdot)$  and thus the continuous invertibility:

\begin{Theo}\label{contsol}\label{th:cont}
Assume that {\bf (ST)} and {\bf (CI)} hold. Then the functions 
$g_t(\cdot)$ are continuous for all  $\theta\in {\Theta}$ and all $t\in\Z$. Moreover, for any
 $\theta\in {\Theta}$ 
there exists an $\epsilon>0$ such that $\hat{g}_{t}(\theta)$ satisfying
\eqref{SREgen} satisfies 
\begin{equation}\label{expoapprox}
\lim\sup_{\theta'\in \overline B(\theta,\epsilon)\cap \Theta}d(\hat{g}_{t}(\theta'),g_{t}(\theta'))\xrightarrow{\mbox{e.a.s.}}0.
\end{equation}
\end{Theo}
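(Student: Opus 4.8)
The plan is to deduce the statement, point by point in $\Theta$, from an application of Theorem~\ref{th:bo} in a Banach space of continuous functions over a small ball. Fix $\theta_\ast\in\Theta$. The first and decisive step is to \emph{localise} the Lyapunov condition of \textbf{(CI)}. By \textbf{(CL)} the map $\theta\mapsto\log\Lambda_{\phi_0}^{(r)}(\theta)$ is continuous, so for each $\omega$ the random variable $f_\epsilon:=\sup_{\theta\in\overline B(\theta_\ast,\epsilon)\cap\Theta}\log\Lambda_{\phi_0}^{(r)}(\theta)$ decreases to $\log\Lambda_{\phi_0}^{(r)}(\theta_\ast)$ as $\epsilon\downarrow0$, while $f_\epsilon^+\le\sup_\Theta\log^+\Lambda_{\phi_0}^{(r)}$, which is integrable by \textbf{(CI)}. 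Dominated convergence for the positive parts together with monotone convergence for the (increasing) negative parts then gives $\E[f_\epsilon]\downarrow\E[\log\Lambda_{\phi_0}^{(r)}(\theta_\ast)]<0$, so one may fix $\epsilon=\epsilon(\theta_\ast)>0$ with $\E[f_\epsilon]<0$ and set $K:=\overline B(\theta_\ast,\epsilon)\cap\Theta$, a compact metric space.

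Next I would lift the SRE \eqref{SREgen} to $C(K,E)$, the space of continuous $E$-valued maps on $K$ endowed with the uniform metric $d_\infty(f,g)=\sup_{\theta\in K}d(f(\theta),g(\theta))$; since $K$ is compact metric and $(E,d)$ is complete separable, $C(K,E)$ is again a complete separable metric space. Define $\Psi_t\colon C(K,E)\to C(K,E)$ by $\Psi_t(f)(\theta)=\phi_t(f(\theta),\theta)$; this lands in $C(K,E)$ by the joint continuity of $\phi_t$, and $(\Psi_t)$ is stationary and ergodic, being a fixed measurable image of $(\phi_t)$ (which is stationary ergodic because $(X_t)$ is, by \textbf{(ST)}). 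Then I would check the hypotheses of Theorem~\ref{th:bo} for $(\Psi_t)$: the fibrewise contraction estimate $\Lambda\big(\phi_0(\cdot,\theta)\circ\cdots\circ\phi_{-r+1}(\cdot,\theta)\big)\le\Lambda_{\phi_0}^{(r)}(\theta)$ yields $\Lambda(\Psi_0^{(r)})\le\sup_{\theta\in K}\Lambda_{\phi_0}^{(r)}(\theta)$, whence $\E[\log\Lambda(\Psi_0^{(r)})]\le\E[f_\epsilon]<0$; the remaining integrability hypotheses $\E[\log^+\Lambda(\Psi_0)]<\infty$ and $\E[\log^+ d_\infty(\Psi_0(c_y),c_y)]<\infty$ for the constant map $c_y\equiv y$ follow from the identity $\log^+\sup=\sup\log^+$ and the two integrability bounds in \textbf{(CI)}.

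Theorem~\ref{th:bo} then provides a unique stationary ergodic solution $(G_t)_{t\in\Z}$ of $G_t=\Psi_t(G_{t-1})$ in $C(K,E)$, with $G_t=\lim_m\Psi_t\circ\cdots\circ\Psi_{t-m}(c_y)$ in $d_\infty$, and with $d_\infty(\tilde G_t,G_t)\xrightarrow{\mbox{e.a.s.}}0$ for any $\tilde G_t=\Psi_t(\tilde G_{t-1})$, $t>0$. Evaluating the uniform limit at a fixed $\theta\in K$ reproduces precisely the backward iteration which, by Theorem~\ref{th:bo} applied pointwise, equals the stationary solution of \eqref{SREgen} at $\theta$; hence $G_t(\theta)=(g_{t-p+1}(\theta),\dots,g_t(\theta))$ for every $\theta\in K$, so $g_t$ is continuous on $K$, and since the balls $\overline B(\theta_\ast,\epsilon(\theta_\ast))$ cover $\Theta$, $g_t$ is continuous on $\Theta$ for every $t\in\Z$. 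Finally, the forecast produced by \eqref{SREgen} from arbitrary initial values gives, restricted to $K$, a sequence $\tilde G_t:=(\hat g_{t-p+1}(\cdot),\dots,\hat g_t(\cdot))|_K$ with $\tilde G_t=\Psi_t(\tilde G_{t-1})$; the last assertion of Theorem~\ref{th:bo} and projection onto the last coordinate (whose $d$-distance is dominated by $d_\infty$) give $\sup_{\theta'\in\overline B(\theta_\ast,\epsilon)\cap\Theta}d(\hat g_t(\theta'),g_t(\theta'))\xrightarrow{\mbox{e.a.s.}}0$, which is the content of \eqref{expoapprox}.

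The main obstacle is the localisation step: turning the strict inequality $\E[\log\Lambda_{\phi_0}^{(r)}(\theta_\ast)]<0$ at a single point into $\E[\sup_{\overline B(\theta_\ast,\epsilon)\cap\Theta}\log\Lambda_{\phi_0}^{(r)}]<0$ over a neighbourhood is exactly what forces the extra integrability $\E[\sup_\Theta\log^+\Lambda_{\phi_0}^{(r)}]<\infty$ in \textbf{(CI)}, which supplies the majorant needed for the convergence argument. A secondary point to verify carefully is that $\Psi_t$ genuinely maps $C(K,E)$ into itself, i.e. that $\phi_t$ is continuous in $(x,\theta)$ jointly (separate continuity in $\theta$ together with \textbf{(CL)} suffices), and that the product metric used on the lag-$p$ state space $E$ is the one with respect to which the coefficients $\Lambda_{\phi_t}$ of \textbf{(CI)} are computed, as stressed in Remark~\ref{rem:dim}; once these are in place the argument is a direct application of Theorem~\ref{th:bo}.
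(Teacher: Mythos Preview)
Your proposal is correct and follows essentially the same route as the paper: localise the Lyapunov condition of \textbf{(CI)} to a small ball $K=\overline B(\theta_\ast,\epsilon)\cap\Theta$, lift the SRE to the Banach space $C(K,E)$ with the sup metric, bound $\Lambda(\Psi_0^{(r)})$ by $\sup_K\Lambda_{\phi_0}^{(r)}$, and apply Theorem~\ref{th:bo}. The only visible difference is cosmetic: for the localisation step the paper truncates $\log\Lambda_{\phi_0}^{(r)}$ at level $-K$ and invokes dominated convergence on the bounded-below truncation, whereas you split $f_\epsilon$ into positive and negative parts and combine dominated and monotone convergence; both arguments yield the same $\epsilon$ and are equally valid.
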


 \begin{proof} Without loss of generality, one can assume that $\E[\log\Lambda_{\phi_0}^{(r)}(\theta)]>-\infty$ such that $\lim_{K\to \infty}\E[\log \Lambda_{\phi_0}^{(r)}(\theta)\vee (-K)] =\E[\log \Lambda_{\phi_0}^{(r)}(\theta)]$ for any $\theta\in\Theta$. For any $\rho>0$, let us write
 $\Lambda_\ast^{(r)}(\theta,\rho)=\sup\{\Lambda_{\phi_0}^{(r)}(\theta'),\theta'\in \overline B(\theta,\rho)\cap \Theta\}$, where $\overline B(\theta,\rho)$ stands 
for the closed ball centered at $\theta$ with radius $\rho$. Noticing that $\E[\sup_\Theta|\log \Lambda_{\phi_0}^{(r)}(\theta)\vee (-K)|]<\infty$, 
using the dominated convergence Theorem we obtain
$\lim_{\rho\to0}\E(\log \Lambda_{\ast}^{(r)}(\theta,\rho)\vee (-K))=\E(\lim_{\rho\to0}\log \Lambda_{\ast}^{(r)}(\theta,\rho)\vee (-K))$.
By continuity $\lim_{\rho\to0}\log \Lambda_{\ast}^{(r)}(\theta,\rho)= \log \Lambda_{\phi_0}^{(r)}(\theta)$ and for sufficiently large $K$
$$
\lim_{\rho\to0}\E(\log \Lambda_{\ast}^{(r)}(\theta,\rho)\vee (-K))= \E(\log \Lambda_{\phi_0}^{(r)}(\theta)\vee (-K))<0.
$$
Thus, there exists an $\epsilon>0$ such that 
$$\E(\log \Lambda_{\ast}^{(r)}(\theta,\epsilon))\le\E(\log \Lambda_{\ast}^{(r)}(\theta,\epsilon)\vee (-K))<0.$$

Let us now work on $C(\overline B(\theta,\epsilon)\cap \Theta)$, the complete metric space of continuous functions from $\overline B(\theta,\epsilon)\cap\Theta$ to $\R$ equipped with the supremum norm $d_\infty=\sup_{\overline B(\theta,\epsilon)\cap\Theta} d$. In this setting $(\hat g_t)$ satisfy a functional SRE $(\hat g_k)_{k\le t} = \tilde\phi_t((\hat g_{k})_{k\le t-1})$ with Lipschitz constants satisfying
\begin{align*}
\Lambda_\infty(\tilde\phi_t^{(r)}(\cdot))&\le \sup_{s_1,s_2\in C(\overline B(\theta,\epsilon)\cap\Theta)}\frac{d_\infty(\tilde\phi_t^{(r)}(s_1),\tilde\phi_t^{(r)}(s_2))}{d_\infty(s_1,s_2)}\\
&\le \sup_{s_1,s_2\in C(\overline B(\theta,\epsilon)\cap\Theta)}\frac{\sup_{\overline B(\theta,\epsilon)\cap\Theta}d(\phi_t^{(r)}(s_1(\theta'),\theta'),\phi_t^{(r)}(s_2(\theta'),\theta')}{d_\infty(s_1,s_2)}\\
&\le \sup_{s_1,s_2\in C(\overline B(\theta,\epsilon)\cap\Theta)}\frac{\sup_{\overline B(\theta,\epsilon)\cap\Theta}\Lambda(\phi_t^{(r)}(\cdot,\theta'))d(s_1(\theta')s_2(\theta'))}{d_\infty(s_1,s_2)}\\
&\le  \sup_{s_1,s_2\in C(\overline B(\theta,\epsilon)\cap\Theta)}\frac{\sup_{\overline B(\theta,\epsilon)\cap\Theta}\Lambda(\phi_t^{(r)}(\cdot,\theta'))d_\infty(s_1,s_2)}{d_\infty(s_1,s_2)}\\
&\le \Lambda_{\ast}^{(r)}(\theta,\epsilon).
\end{align*}
As $\E[\sup_{\overline B(\theta,\epsilon)\cap\Theta}\log^+(d(\phi_t(y,\theta'),y))]\le \E[\sup_\Theta\log^+(d(\phi_t(y,\theta),y))] $ is finite and $ \E(\log \Lambda_{\phi_0}^{(r)}(\theta))<0$ we apply Theorem \ref{thm:Sol of SRE}. 
 By recurrence $\phi_{t}\circ\cdots\circ\phi_{t-m}(\zeta_{0})\in C(\overline B(\theta,\epsilon)\cap\Theta)$ is continuous 
in $\theta$  and  so is $g_t$ as the convergence holds uniformly on $\overline B(\theta,\epsilon)\cap\Theta$. 
It is true for any $\theta\in {\Theta}$ and the result follows.
\end{proof}

\section{Statistical inference under continuous invertibility}\label{sec:inf}

Consider  $\hat{\theta}_{n}=\mbox{argmin}_{\theta\in\Theta}\hat{S}_{n}(\theta)$
the M-estimator associated with the QLIK criteria \eqref{eq:hat_L} where $(\hat g_t)$ is obtained from the approximative SRE \eqref{SREgen}. 
\begin{rem}\label{rem:qmle} The statistical procedure described here does not coincide  with the Quasi Maximum Likelihood for non uniformly invertible models. For a detailed discussion in the EGARCH(1,1) case see \cite{WC}.
\end{rem}
\subsection{Strong consistency of the parametric inference}
From now on, we assume that the innovations process $(Z_t)$ is iid: 
\begin{description}
\item[(IN)]  The $Z_t$ are iid variables such that $\E[Z_0^TZ_0] $ is the identity matrix.
\end{description}
The next assumption 
implies that the volatility matrices are  invertible and that the link function $\ell$ is continuous:
\begin{description}
\item[(IV)]  The functions $\ell^{-1}$ and $\log(\det(\ell))$ are Lipschitz 
satisfying 
$\det(\ell(g_0(\theta)))\ge C(\theta)$ for some continuous function $C:\Theta\mapsto (0,\infty)$.
\end{description}
\begin{rem}\label{rem:lf}
The SRE criteria converges to the  possibly degenerate limit
$$
S(\theta)=\E[s_0(\theta)]= 2^{-1}\E\left[ X_{0}^T  \ell( g_{0}(\theta)) ^{-1}X_{0}
+\log(\det[\ell( g_{0}(\theta))])\right]
$$
Notice that $S(\theta_0)=2^{-1}\E[Z_0^TZ_0+\log(\det(\Sigma_0))]$ is finite under {\bf (ST)}, {\bf (IN)} and {\bf (IV)} because $h(\Sigma_0)$ has logarithmic moments and  $\log(\det(\ell))$ is Lipschitz. It is a considerable advantage of the QLIK criteria: it does not need moments of any order to be defined in $\theta_0$. Even if $S(\theta)$ may be equal to $+\infty$ for $\theta\neq\theta_0$, it does not disturb the statistical procedure that defines $\hat\theta_n$ as a minimizer.
\end{rem}
If the model is identifiable, the estimator $\hat\theta_n$ is strongly consistent:
\begin{Theo}\label{th:sc}
Assume that {(\bf ST)} and  {\bf (CI)} are satisfied on the compact set $\Theta$. If {\bf (IN)} and {\bf (IV)} are satisfied and the model is identifiable, i.e. $g_0(\theta)=h(\Sigma_0 )$
 iff $\theta=\theta_0$, then $\hat\theta_n\to \theta_0$ a.s. for any $\theta_0\in {\Theta}$.
\end{Theo}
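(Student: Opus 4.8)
The plan is to run a Wald-type consistency argument along the lines of \citet{Jeantheau1993}, the novelty being that under {\bf (CI)} (rather than uniform invertibility) the approximation of the stationary solution $g_t(\cdot)$ by the recursively computed $\hat g_t(\cdot)$ is controlled only \emph{locally} on $\Theta$, so the usual global uniform convergence of the criterion is unavailable and must be replaced by a localisation/compactness argument. Assume $\theta_0\in\Theta$. Write $s_t(\theta)=2^{-1}\big(X_t^{T}\ell(g_t(\theta))^{-1}X_t+\log\det\ell(g_t(\theta))\big)$ and $S_n(\theta)=n^{-1}\sum_{t=1}^{n}s_t(\theta)$ for the ``stationary'' criterion built on $g_t(\cdot)$; since $\hat\phi_t=\phi_t$ for $t>q$ and the recursion \eqref{SREgen} is finite, $\hat g_t(\cdot)$ is continuous and $\hat S_n$ attains its minimum, so $\hat\theta_n$ is well defined. \emph{Step 1 (local control of the approximation error).} Fix $\theta\in\Theta$ and let $\epsilon>0$ be as in Theorem \ref{th:cont}, so that $\sup_{\theta'\in\overline B(\theta,\epsilon)\cap\Theta}d(\hat g_t(\theta'),g_t(\theta'))=o(e^{-Ct})$ a.s. for some $C>0$. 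Using that $\ell^{-1}$ and $\log\det\ell$ are Lipschitz by {\bf (IV)} one bounds $\sup_{\theta'\in\overline B(\theta,\epsilon)\cap\Theta}|\hat s_t(\theta')-s_t(\theta')|\le(K\|X_t\|^{2}+K')\,o(e^{-Ct})$. Under {\bf (ST)}, $\|X_t\|^{2}$ has finite logarithmic moments, hence by Borel--Cantelli $\|X_t\|^{2}\le e^{\epsilon' t}$ eventually, a.s., for every $\epsilon'>0$; taking $\epsilon'<C$, the general term of $\sum_t\sup_{\overline B(\theta,\epsilon)\cap\Theta}|\hat s_t-s_t|$ tends to $0$ a.s. and so does its Ces\`aro mean, giving $\sup_{\theta'\in\overline B(\theta,\epsilon)\cap\Theta}|\hat S_n(\theta')-S_n(\theta')|\to0$ a.s.; in particular $\hat S_n(\theta_0)-S_n(\theta_0)\to0$ a.s.

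\emph{Step 2 (pointwise limit and identification).} For each $\theta$, $(s_t(\theta))_t$ is stationary ergodic and bounded below by $2^{-1}\log C(\theta)>-\infty$, so $\E[s_0(\theta)^-]<\infty$; applying Birkhoff's theorem to $s_0(\theta)\wedge M$ and letting $M\uparrow\infty$ yields $S_n(\theta)\to S(\theta)=\E[s_0(\theta)]\in(-\infty,+\infty]$ a.s., with $S(\theta_0)$ finite by Remark \ref{rem:lf}. Conditioning on $\cG_{-1}$ and using that $g_0(\theta)$ and $\Sigma_0$ are $\cG_{-1}$-measurable while $Z_0$ is independent of $\cG_{-1}$ with $\E[Z_0Z_0^{T}]=I$, one obtains $S(\theta)-S(\theta_0)=2^{-1}\E[\,\mathrm{tr}(A_\theta)-k-\log\det A_\theta\,]$ with $A_\theta=\Sigma_0^{1/2}\ell(g_0(\theta))^{-1}\Sigma_0^{1/2}$ symmetric positive definite and $k$ the dimension. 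Since $x\mapsto x-1-\log x\ge0$ with equality iff $x=1$, the integrand is nonnegative and vanishes exactly when $A_\theta=I$, i.e. $\ell(g_0(\theta))=\Sigma_0$, i.e. $g_0(\theta)=h(\Sigma_0)$; by identifiability this happens iff $\theta=\theta_0$. Hence $S(\theta)>S(\theta_0)$ for every $\theta\neq\theta_0$.

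\emph{Step 3 (localisation, compactness, conclusion).} Fix $\theta_1\neq\theta_0$. Since $g_t(\cdot)$ is continuous (Theorem \ref{th:cont}) and $\ell^{-1},\log\det\ell$ are continuous, $\theta\mapsto s_t(\theta)$ is continuous; choose $M$ large so that $\E[s_0(\theta_1)\wedge M]>S(\theta_0)$ (possible since $\E[s_0(\theta_1)\wedge M]\uparrow S(\theta_1)$), then $\rho\in(0,\epsilon]$ small (with $\epsilon$ from Step 1 at $\theta_1$) so that, by bounded convergence, $\E[(\inf_{\theta\in\overline B(\theta_1,\rho)\cap\Theta}s_0(\theta))\wedge M]>S(\theta_0)$. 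As $(\inf_{\theta\in\overline B(\theta_1,\rho)\cap\Theta}s_t(\theta))_t$ is stationary ergodic and bounded below, Birkhoff together with Step 1 yields $\liminf_n\inf_{\theta\in\overline B(\theta_1,\rho)\cap\Theta}\hat S_n(\theta)\ge\E[(\inf_{\theta\in\overline B(\theta_1,\rho)\cap\Theta}s_0(\theta))\wedge M]>S(\theta_0)$ a.s. Fix $\delta>0$; covering the compact set $\Theta\setminus B(\theta_0,\delta)$ by finitely many such balls and taking the minimum of the finitely many a.s. bounds gives $\eta>0$ with $\liminf_n\inf_{\theta\in\Theta\setminus B(\theta_0,\delta)}\hat S_n(\theta)\ge S(\theta_0)+\eta$ a.s. Meanwhile $\hat S_n(\hat\theta_n)\le\hat S_n(\theta_0)\to S(\theta_0)$ a.s. by Steps 1 and 2, so a.s. for $n$ large $\hat\theta_n\in B(\theta_0,\delta)$; as $\delta$ is arbitrary, $\hat\theta_n\to\theta_0$ a.s.

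\emph{Expected main difficulty.} The crux is the absence of \emph{uniform} invertibility: there is no global $\sup_\Theta|\hat S_n-S_n|\to0$, so everything must be done inside the small balls provided by Theorem \ref{th:cont} and reassembled by compactness, and this has to be carried out while $S(\theta)$ may equal $+\infty$ off $\theta_0$. The care is in truncating at level $M$ inside the infimum over small balls, using the lower-bound (rather than convergence) form of the ergodic theorem for non-integrable functions bounded below, and checking that the $M$-dependent lower bounds can be made strictly above the finite value $S(\theta_0)$ uniformly over a finite subcover. The identification in Step 2 is, by comparison, routine Gaussian quasi-likelihood algebra.
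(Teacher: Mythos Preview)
Your proof is correct and follows essentially the same route as the paper: both run the Wald--Jeantheau consistency scheme, using the local e.a.s.\ approximation from Theorem~\ref{th:cont} together with the Lipschitz bounds and lower bound in {\bf (IV)} to control $|\hat s_t-s_t|$ on small balls, and the eigenvalue inequality $x-1-\log x\ge0$ for identification. The paper packages Steps~1--3 by checking conditions {\bf H1--H6} of \citet{Jeantheau1993} (with your Step~1 as the first half of its {\bf H2} and your Step~3 as the content of {\bf H4--H6}), whereas you unpack the argument explicitly; the only cosmetic difference is that you handle the possibility $S(\theta)=+\infty$ by the truncation $s_0(\theta)\wedge M$ inside the local infimum, which the paper delegates to Propositions~1.1 and~1.3 of \citet{Jeantheau1993}.
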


\begin{proof}With no loss of generality we  restrict  our propose 
to $\theta\in\Theta$ satisfying the relation $\det(\ell(\hat \phi_t(\cdot,\theta)))\ge C(\theta)$ for all $t\le 0$.  We adapt the proof of  \citet{Jeantheau1993} and its notation 
$  s_{\ast t}(\theta,\rho)=\inf\{ s_t(\theta'), \theta'\in \overline B(\theta,\rho)\}$ and
 $\hat s_{\ast t}(\theta,\rho)=\inf\{\hat s_t(\theta'), \theta'\in \overline B(\theta,\rho)\}$.
Let us recall Theorem 5.1 in \citet{Jeantheau1993} : The M-estimator associated with the loss \eqref{eq:hat_L}
is strongly consistent under the hypothesis {\bf H1-H6}:
\begin{description}
\item[H1]\label{ite:H1}  $\Theta$ is
compact. 
\item[H2]\label{ite:H2} $\hat S_n(\theta)\to S(\theta)$ a.s. under the stationary law $P_{\theta_{0}}$.
\item[H3]\label{ite:H3} $S(\theta)$ admits a unique minimum for $\theta=\theta_0$ in $ {\Theta}$. 
Moreover for any $\theta_1\neq\theta_0$ we have:
$$
\lim\inf_{\theta\to\theta_1} S(\theta)>S(\theta_1).
$$
\item[H4]\label{ite:H4} $\forall \theta\in\Theta$ and sufficiently small $\rho>0$ the process 
$(\hat s_{\ast t}(\theta,\rho)))_t$ is ergodic. 
\item[H5] \label{ite:H5} $\forall \theta\in\Theta$, $\E_{\theta_0}[s_{\ast1}(\theta,\rho)]>-\infty$.
\item[H6] \label{ite:H6} $\lim_{\rho\to0}\E_{\theta_0}[s_{\ast1}(\theta,\rho)]=\E[s_{\ast1}(\theta)]$.
\end{description}
Let us check {\bf H1-H6} in our case. {\bf H1} is satisfied by assumption. {\bf H2} is verified in two steps. First, by the e.a.s.
 convergence given by Theorem \ref{contsol}, arguments of \citet{Straumann2005} and the Lipschitz properties of $\ell^{-1}$ and $\log(\det(\ell))$ we obtain
$$
\frac 1n \sum_{t=1}^n\sup_{\overline B(\theta,\epsilon)}|\hat s_t(\theta')-s_t(\theta')|\to 0 \qquad P_{\theta_0}-a.s.
$$
Second we use that $(s_t)$ is an ergodic sequence and Proposition 1.1 of \citet{Jeantheau1993}. As the $s_t$ are bounded from below $n^{-1}\sum_{t=1}^n s_t(\theta)$
 converges $P_{\theta_0}$-a.s. to $S(\theta)$ (taking values in $\R\cup\{+\infty\}$)  
$$
\frac 1n \sum_{t=1}^n | s_t(\theta)-S(\theta)|\to 0 \qquad P_{\theta_0}-a.s.
$$
Combining this two steps leads to {\bf H2}. The first part of {\bf H3} is checked similarly than in (ii) p.2474 of \citet{SM2006} and with the help of the Remark \ref{rem:lf}. Notice that $S$ has a unique minimum iff $\E [\mbox{Tr}(\Sigma_0\cdot\ell(g_0(\theta))^{-1})-\log(\det(\Sigma_0\cdot\ell(g_0(\theta))^{-1} ))] $ has a unique minimum. As this criteria is the integrand of a sum of the $\lambda_i-\log(\lambda_i)$ where the $\lambda_i$ are positive eigenvalues, we conclude under the identifiability condition from the property $x-\log(x)\ge 1$ for all $x>0$ with equality iff $x=1$. 
The second part is checked using the fact that 
$$
\lim \inf_{\theta\to\theta_1}S(\theta)\ge \E[\lim \inf_{\theta\to\theta_1}s_0(\theta)]=\E[ s_0(\theta_1)]=S(\theta_1)
$$ where the first inequality was already used for proving Theorem \ref{contsol} and the first equality comes from the 
local continuity of $g_0$ and $\ell$. {\bf H4} is satisfied from the ergodicity of $(\hat s_t)$. {\bf H5} and {\bf H6} follows from Theorem \ref{contsol}
that ensures the continuity of the function $s_{\ast 1}$ and by the lower bounded assumption on $\det( \ell)$, 
see Proposition 1.3 of \citet{Jeantheau1993}. 
\end{proof}

\subsection{Volatility forecasting}

From the inference of $\theta_0$, we deduce a natural forecast of the volatility $\hat\Sigma_t=\ell(\hat g_t(\hat\theta_t))$. It is strongly consistent:

\begin{Theo}\label{pr:vf}
Under the conditions of Theorem \ref{th:sc} then  $\|\hat\Sigma_t- \Sigma_t\|\to 0$ a.s. as $t\to\infty$.
\end{Theo}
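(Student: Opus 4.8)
The plan is to decompose the forecasting error $\|\hat\Sigma_t-\Sigma_t\|$ through the intermediate quantity $\ell(\hat g_t(\hat\theta_t))$ against $\ell(g_t(\theta_0))=\Sigma_t$, using a triangle inequality that separates the error coming from plugging in the estimated $\hat\theta_t$ instead of $\theta_0$ from the error coming from the approximative SRE initialization. Concretely, I would write
\begin{align*}
\|\hat\Sigma_t-\Sigma_t\| \le \|\ell(\hat g_t(\hat\theta_t))-\ell(g_t(\hat\theta_t))\| + \|\ell(g_t(\hat\theta_t))-\ell(g_t(\theta_0))\|,
\end{align*}
and control the two terms separately. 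Throughout I would use the Lipschitz property of $\ell$ (more precisely of $\ell^{-1}$ and $\log\det\ell$ from {\bf (IV)}, together with the local boundedness of $g_t$ away from the degeneracy set), so that it suffices to bound the corresponding distances $d(\hat g_t(\hat\theta_t),g_t(\hat\theta_t))$ and $d(g_t(\hat\theta_t),g_t(\theta_0))$.

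For the second term, the key input is the strong consistency $\hat\theta_t\to\theta_0$ a.s.\ from Theorem \ref{th:sc}, combined with the continuity of $g_t(\cdot)$ established in Theorem \ref{contsol}. Fix a realization in the almost-sure event where $\hat\theta_t\to\theta_0$. For $t$ large, $\hat\theta_t$ eventually lies in any prescribed neighbourhood $\overline B(\theta_0,\epsilon)\cap\Theta$, so $d(g_t(\hat\theta_t),g_t(\theta_0))$ is controlled by the modulus of continuity of $g_t$ at $\theta_0$. The subtlety is that $g_t$ is a random function whose modulus of continuity varies with $t$, so pointwise continuity of each $g_t$ is not by itself enough; I would instead invoke the \emph{local uniform} e.a.s.\ control from \eqref{expoapprox}, together with the fact that $(g_t(\cdot))$ is stationary ergodic on $C(\overline B(\theta_0,\epsilon)\cap\Theta)$, to get that $\sup_{\theta'\in\overline B(\theta_0,\epsilon)\cap\Theta} d(g_t(\theta'),g_t(\theta_0))$ does not blow up; by shrinking $\epsilon$ and using continuity in the limiting stationary solution one makes this uniformly small. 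I expect this to be the main obstacle: transferring continuity of the functional solution into a genuine control of $d(g_t(\hat\theta_t),g_t(\theta_0))$ that is uniform enough in $t$ to survive the $t\to\infty$ limit, since $\hat\theta_t$ and $g_t$ are not independent and the neighbourhood must be chosen before knowing how fast $\hat\theta_t$ converges.

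For the first term, $d(\hat g_t(\theta),g_t(\theta))$, the plan is to use \eqref{expoapprox} of Theorem \ref{contsol}: for each $\theta\in\Theta$ there is an $\epsilon>0$ with $\sup_{\theta'\in\overline B(\theta,\epsilon)\cap\Theta}d(\hat g_t(\theta'),g_t(\theta'))\xrightarrow{\mathrm{e.a.s.}}0$. Covering the compact $\Theta$ by finitely many such balls $\overline B(\theta_i,\epsilon_i)$ and taking the maximum over this finite cover, one obtains $\sup_{\theta\in\Theta}d(\hat g_t(\theta),g_t(\theta))\xrightarrow{\mathrm{e.a.s.}}0$, hence a fortiori along the random sequence $\hat\theta_t$: $d(\hat g_t(\hat\theta_t),g_t(\hat\theta_t))\to 0$ a.s. This is the same finite-cover device already used implicitly in the proof of Theorem \ref{th:sc} (step {\bf H2}).

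Putting the two pieces together, both terms in the triangle inequality tend to $0$ almost surely, and applying the (local) Lipschitz property of $\ell$ — valid since, by the reduction at the start of the proof of Theorem \ref{th:sc}, we work on the region where $\det(\ell(\cdot))$ stays bounded below — yields $\|\hat\Sigma_t-\Sigma_t\|\to 0$ a.s., which is the claim. I would close by remarking that this also establishes the observable invertibility statement alluded to in Definition \ref{def:oinv} with convergence upgraded from probability to almost surely.
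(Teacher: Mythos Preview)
Your decomposition and the treatment of the first term are essentially the same as the paper's. The gap is in the second term, $d(g_t(\hat\theta_t),g_t(\theta_0))$, where you yourself flag the difficulty but do not resolve it. Your proposed fix --- ``shrink $\epsilon$ and use continuity of the stationary solution'' --- cannot work as stated: for a stationary ergodic sequence $(Y_t)$, one has $\limsup_t Y_t=\mathrm{ess\,sup}\,Y_0$ a.s., so for any fixed $\epsilon$ the quantity $\limsup_t\sup_{\theta'\in\overline B(\theta_0,\epsilon)}d(g_t(\theta'),g_t(\theta_0))$ equals the \emph{essential supremum} of the modulus of continuity of $g_0$ on that ball, not its typical value. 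There is no reason this essential supremum tends to $0$ with $\epsilon$ unless you have a deterministic equicontinuity bound, which {\bf (CI)} does not provide. And you cannot let $\epsilon$ depend on $t$ in a useful way, since the random time after which $\hat\theta_t\in\overline B(\theta_0,\epsilon)$ itself depends on $\epsilon$.

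The paper's proof handles this term by exploiting the SRE structure rather than continuity alone. One writes
\[
d(g_t(\hat\theta_t),g_t(\theta_0))\le \Lambda_\infty(\phi_t)\,d(g_{t-1}(\hat\theta_t),g_{t-1}(\theta_0))+w_t(\hat\theta_t),
\]
with $w_t(\theta)=d(\phi_t(g_{t-1}(\theta_0),\theta),\phi_t(g_{t-1}(\theta_0),\theta_0))$, and iterates to get a bound by the series $\sum_{i\ge 0}\Lambda_\infty(\phi_t)\cdots\Lambda_\infty(\phi_{t-i+1})\,w_{t-i}(\hat\theta_t)$. This series is normally convergent (by the local Lyapunov condition and the log-moment bound on $w$), so conditioning on $(\hat\theta_t)$ and using stationarity one reduces to showing that each $w_{-i}(\hat\theta_t)\to w_{-i}(\theta_0)=0$, which is pure continuity of $\phi_{-i}$ in $\theta$. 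The point is that the SRE recursion converts the problematic ``diagonal'' limit $g_t(\hat\theta_t)$ into a series whose individual terms only require continuity at \emph{fixed} times, where your argument applies cleanly.
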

\begin{proof}
It is a consequence of Theorems \ref{th:cont} and \ref{th:sc} that assert the a.s. convergence of
 $\hat\theta_t$ toward $\theta_0$ and the local uniform convergence of $\hat g_t$ toward $g_t$. Notice that for $t$ sufficiently large such that $\hat \theta_t\in\overline B(\theta,\epsilon)$, a ball where the uniform Lyapunov condition  
$\E[ \log \Lambda_\infty(\phi_t(\cdot))]<0$ is satisfied. Thus $\hat g_t(\hat \theta_t)- g_t(\theta_t) \to 0$ a.s. and by continuity of $\ell$ and $g_t$ 
and from the identification $\Sigma_t=\ell( g_t( \theta_0))$ the result follows if $g_t(\hat \theta_t)$ converges to $g_t( \theta_0)$.
 For proving it,  we use
$$
d(g_t(\hat \theta_t),g_t( \theta_0))\le  \Lambda_\infty(\phi_t(\cdot))d(g_{t-1}(\hat \theta_t),g_{t-1}( \theta_0))+w_t(\hat\theta_t)$$
where $w_t(\hat\theta_t)=d(\phi_t(g_{t-1}(\theta_0),\hat \theta_t),\phi_t(g_{t-1}(\theta_0), \theta_0))$. The quantity $d(g_t(\hat \theta_t),g_t( \theta_0)) $ is upper bounded by $v_t$ satisfying the SRE of linear stationary maps $v_t= \Lambda_\infty(\phi_t(\cdot))v_{t-1}+w_t(\hat\theta_t)$. We apply Theorem \ref{th:bo}
as $\E[\log(\Lambda_\infty(\phi_t(\cdot)))]<0$ and $\E\log^+( w_0(\hat\theta_t ))\le \E[\sup_\Theta\log^+(2d(\phi_t(y,\theta),y))]<\infty$. We get
 $$
d(g_t(\hat \theta_t),g_t( \theta_0))\le \sum_{i=0}^\infty\Lambda_\infty(\phi_t(\cdot))\cdots\Lambda_\infty(\phi_{t-i+1}(\cdot))w_{t-i}(\hat\theta_t).
$$
Conditioning on $(\hat\theta_t)$, the  upper bound is a stationary normally convergent series of functions and
\begin{align*}
&\P\Big(\sum_{i=0}^\infty\Lambda_\infty(\phi_t(\cdot))\cdots\Lambda_\infty(\phi_{t-i+1}(\cdot))w_{t-i}(\hat\theta_t)\to 0 \Big)\\
&=\E\Big[\P\Big(\sum_{i=0}^\infty\Lambda_\infty(\phi_t(\cdot))\cdots\Lambda_\infty(\phi_{t-i+1}(\cdot))w_{t-i}(\hat\theta_t)\to 0 ~|~(\hat\theta_t)\Big)\Big]\\
&=\E\Big[\P\Big(\sum_{i=0}^\infty\Lambda_\infty(\phi_0(\cdot))\cdots\Lambda_\infty(\phi_{-i+1}(\cdot))w_{-i}(\hat\theta_t)\to 0 ~|~(\hat\theta_t)\Big)\Big] \\&= \E[1]=1,
\end{align*}
the last inequalities following from the continuity of normally convergent series of functions, $\hat\theta_t\to\theta_0$ and $w_i(\hat\theta_t)\to w_i( \theta_0)=0$ for all $i$ a.s. as $t\to \infty$.
 \end{proof}

\subsection{Asymptotic normality  of the parametric inference}

Classical computations show  that if the M-estimator $\hat{\theta}_{n}$ is asymptotically normal then 
the asymptotic variance is given by the expression 
$$\bfV=\bfP^{-1}\bfQ\bfP^{-1}$$
with $\bfP=\E[\mathbb H s_0(\theta_0)]$
and $\bfQ=\E[\nabla s_0(\theta_0)\nabla s_0(\theta_0)^T]$, where $\mathbb H s_0(\theta_0)$ and $\nabla s_0$ are the Hessian matrix and the gradient vector of $s_0(\theta_0)$.    \begin{description}
\item[(AV)] Assume that $\E(\|Z_0Z_0^T\|^2)<\infty$ and that the functions $\ell$ and $\phi_t$ 
are 2-times continuously differentiable on the compact set $\Theta$ that coincides with the closure of its interior. 
\end{description}
The following moments assumptions ensure the existence of $\bfQ$ and $\bf P$:
\begin{description}
\item[(MM)] Assume that 
$\E[ \|\nabla s_0(\theta_0) \|^2]<\infty$ and  $\E[\| \mathbb H s_0(\theta_0)\|]<\infty$.
\end{description}
These moments assumptions holds only for $\theta=\theta_0$; they are simpler to verify than for the moment conditions for $\theta\neq\theta_0$ due to the specific form of the derivatives of  the SRE criteria at $\theta_0$, see  Remark \ref{rem:lf} for details.
The next assumption is classical and ensures to the existence of  $\bfP^{-1}$:
\begin{description}
\item[(LI)] The components of the vector $\nabla g_0(\theta_0)$ are linearly independent.
\end{description}
Let $\mathcal V=\overline B(\theta_0,\epsilon)\subset \Theta$  with $\theta_0\in \stackrel{\circ}\Theta$ and $\epsilon>0$ chosen in accordance with Theorem \ref{th:cont}, i.e. such that $\E[\log(\sup_{\mathcal V}\Lambda_{\phi_0})]<0$. The two next assumptions are specific to the SRE approach. They ensure  that $\nabla \hat s_t(\theta)$ is a good approximation of $\nabla   s_t(\theta)$ uniformly on the neighborhood $\mathcal V$ of $\theta_0$:
\begin{description}
\item[(DL)]  The partial derivatives $\Phi_t=D_x(\phi_t)$, $=D_\theta(\phi_t)$, $=D_{x^2}^2(\phi_0) $,
 $D_{\theta,x}^2(\phi_0) $ or $D_{\theta^2}^2(\phi_0)$ satisfy {\bf (CL)} for stationary 
$(\Lambda_{\Phi_t})$ with $\E[\sup_{\mathcal V}\log (\Lambda_{\Phi_0} )]<\infty$. Assume there exists $y\in E$ such that $\E[\sup_{\mathcal V}(\log^+(d(\phi_0(y,\theta),y)))^2]<\infty$.
\item[(LM)] Assume that $y\to\nabla \ell^{-1}(y)$ and $y\to \nabla \log(\det(\ell(y)))$ are Lipschitz functions.
\end{description}
Now that $\bfV$ is well defined in terms of derivatives of $s_0$ that are well approximated by derivatives of the SRE, the procedure is asymptotically normal:

\begin{Theo}\label{th:an}
Under the assumptions of Theorem \ref{th:sc}, {\bf (AV)}, {\bf (MM)}, {\bf (LI)},  {\bf (DL)} and {\bf (LM)} then the asymptotic variance $\bfV$ is a well defined invertible matrix and $$\sqrt n(\hat\theta_n-\hat\theta_0)\stackrel{d}{\to} \mathcal N(0,\bfV)\quad\mbox{if}\quad \theta_0\in\stackrel{\circ}{\Theta}.$$ 
\end{Theo}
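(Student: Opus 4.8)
The plan is the classical two--term M--estimation argument, the two SRE--specific difficulties (replacing the approximative forecast $\hat g_t$ by the stationary $g_t$, and a uniform law of large numbers for the Hessian) being handled along the lines of \citet{BW2009}. Since $\theta_0\in\stackrel{\circ}{\Theta}$ and $\hat\theta_n\to\theta_0$ a.s.\ by Theorem \ref{th:sc}, for $n$ large $\hat\theta_n$ lies in the interior of $\Theta$; by {\bf (AV)} the map $\theta\mapsto\hat S_n(\theta)$ is then $C^2$ near $\theta_0$, so $\nabla\hat S_n(\hat\theta_n)=0$ eventually. A Taylor expansion with integral remainder around $\theta_0$ gives
\begin{equation*}
0=\sqrt n\,\nabla\hat S_n(\theta_0)+\Big(\int_0^1\mathbb H\hat S_n\big(\theta_0+u(\hat\theta_n-\theta_0)\big)\,du\Big)\sqrt n(\hat\theta_n-\theta_0),
\end{equation*}
so it is enough to prove: (i) $\sqrt n\,\nabla\hat S_n(\theta_0)\stackrel{d}{\to}\mathcal N(0,\bfQ)$; (ii) $\int_0^1\mathbb H\hat S_n(\theta_0+u(\hat\theta_n-\theta_0))\,du\to\bfP$ a.s.; (iii) $\bfP$ and $\bfQ$ are finite and $\bfP$ is invertible. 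Slutsky's lemma then yields $\sqrt n(\hat\theta_n-\theta_0)\stackrel{d}{\to}-\bfP^{-1}\mathcal N(0,\bfQ)=\mathcal N(0,\bfV)$ with $\bfV=\bfP^{-1}\bfQ\bfP^{-1}$.

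For (i) I would split $\sqrt n\,\nabla\hat S_n(\theta_0)=n^{-1/2}\sum_{t=1}^n\nabla s_t(\theta_0)+n^{-1/2}\sum_{t=1}^n(\nabla\hat s_t(\theta_0)-\nabla s_t(\theta_0))$. For the first sum, invertibility ($g_t(\theta_0)=h(\Sigma_t)$ is $\mathcal G_{t-1}$--measurable), assumption {\bf (IN)} and the explicit form of $s_t$ give $\E[\nabla s_t(\theta_0)\mid\mathcal G_{t-1}]=0$ (the conditional expectation of the quadratic part reduces, via $\E[X_tX_t^T\mid\mathcal G_{t-1}]=\Sigma_t$, to minus the gradient of the $\log\det$ part), so $(\nabla s_t(\theta_0),\mathcal G_t)$ is a stationary ergodic martingale--difference sequence, square integrable by {\bf (MM)}, and the martingale central limit theorem gives convergence to $\mathcal N(0,\bfQ)$ with $\bfQ=\E[\nabla s_0(\theta_0)\nabla s_0(\theta_0)^T]$. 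For the second sum, differentiating the SRE \eqref{SREgen} produces a linear (tangent) SRE for $\nabla g_t(\cdot)$, and likewise for $\nabla\hat g_t(\cdot)$, with coefficients built from $\Phi_t=D_x\phi_t$ and $D_\theta\phi_t$; under {\bf (DL)} these tangent SREs are convergent on $\mathcal V$ with finite logarithmic moments --- this is exactly where Theorem \ref{prop:log} is used, turning the squared logarithmic moments of {\bf (DL)} into the ordinary logarithmic moments required to apply Theorem \ref{th:bo} --- and $\sup_{\mathcal V}d(\nabla\hat g_t,\nabla g_t)\xrightarrow{\mbox{e.a.s.}}0$. Combined with {\bf (LM)} and the Lipschitz properties from {\bf (IV)} this gives $\sup_{\mathcal V}\|\nabla\hat s_t-\nabla s_t\|\xrightarrow{\mbox{e.a.s.}}0$, so that this sequence is a.s.\ summable and $n^{-1/2}\sum_{t=1}^n(\nabla\hat s_t(\theta_0)-\nabla s_t(\theta_0))\to0$ a.s.

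For (ii) I would note that the same differentiation, carried one order further, produces linear SREs for $\mathbb H g_t(\cdot)$ and $\mathbb H\hat g_t(\cdot)$, and {\bf (DL)}--{\bf (LM)} again yield $\sup_{\mathcal V}\|\mathbb H\hat s_t-\mathbb H s_t\|\xrightarrow{\mbox{e.a.s.}}0$; meanwhile $\theta\mapsto\mathbb H s_t(\theta)$ is continuous and uniformly integrable on $\mathcal V$ (from {\bf (AV)}, {\bf (DL)}, {\bf (LM)}), so the uniform strong law of large numbers of \citet{SM2006} gives $\sup_{\mathcal V}\|n^{-1}\sum_{t=1}^n\mathbb H s_t(\theta)-\E[\mathbb H s_0(\theta)]\|\to0$ a.s. Since the segment $\theta_0+u(\hat\theta_n-\theta_0)$, $u\in[0,1]$, eventually lies in $\mathcal V$ and $\theta\mapsto\E[\mathbb H s_0(\theta)]$ is continuous at $\theta_0$, it follows that $\int_0^1\mathbb H\hat S_n(\theta_0+u(\hat\theta_n-\theta_0))\,du\to\E[\mathbb H s_0(\theta_0)]=\bfP$ a.s. For (iii), evaluating the Hessian at $\theta_0$ and taking conditional expectations as in (i) kills the terms in $\mathbb H g_0$ and in $X_0X_0^T-\Sigma_0$, leaving $\bfP=\E[\nabla g_0(\theta_0)^T A_0\,\nabla g_0(\theta_0)]$ with $A_0$ the symmetric positive definite Hessian of $y\mapsto\log\det\ell(y)+\mbox{Tr}(\Sigma_0\ell(y)^{-1})$ at $y=h(\Sigma_0)$ --- positive definiteness coming from the strict convexity argument $x-\log x\ge1$ already used for {\bf H3} in the proof of Theorem \ref{th:sc}. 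Hence $c^T\bfP c=0$ forces $\nabla g_0(\theta_0)c=0$ a.s., contradicting {\bf (LI)} unless $c=0$; so $\bfP$ is invertible, and $\bfP,\bfQ$ are finite by {\bf (MM)}, making $\bfV$ well defined and invertible.

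The main obstacle is the block of approximation estimates in steps (i) and (ii): showing that the derivatives, up to second order, of the approximative forecast $\hat g_t(\cdot)$ converge exponentially fast, uniformly on the neighbourhood $\mathcal V$, to those of the stationary solution $g_t(\cdot)$. This forces one to write down and control the cascade of linear tangent SREs satisfied by $\nabla g_t$ and $\mathbb H g_t$, whose driving coefficients only possess logarithmic moments; Theorem \ref{prop:log} is the crucial tool, since it is precisely what upgrades the squared--logarithmic--moment hypotheses of {\bf (DL)} to the logarithmic moments needed to apply Theorem \ref{th:bo} to these tangent equations and to obtain the e.a.s.\ bounds. Once these estimates are available, the martingale central limit theorem for $(\nabla s_t(\theta_0))$, the uniform law of large numbers for $(\mathbb H s_t)$ and the invertibility of $\bfP$ are comparatively routine.
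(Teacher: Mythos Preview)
Your overall architecture is the paper's: Taylor expansion, martingale CLT for the score under {\bf (MM)} and {\bf (IN)}, e.a.s.\ control of $\hat g_t-g_t$ and its derivatives on $\mathcal V$ via the tangent SREs, and invertibility of $\bfP$ from {\bf (LI)}. You are also right that Theorem \ref{prop:log} is the key device that converts the hypotheses in {\bf (DL)} into the logarithmic moments needed to run Theorem \ref{th:bo} on the tangent equations for $\nabla g_t$ and $\mathbb H g_t$.

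There is, however, a genuine gap in your step (ii). You invoke the uniform ergodic theorem of \citet{SM2006} for $(\mathbb H s_t(\theta))_{\theta\in\mathcal V}$, asserting that ``$\theta\mapsto\mathbb H s_t(\theta)$ is \ldots\ uniformly integrable on $\mathcal V$ (from {\bf (AV)}, {\bf (DL)}, {\bf (LM)})''. These assumptions do \emph{not} yield $\E[\sup_{\mathcal V}\|\mathbb H s_0(\theta)\|]<\infty$: {\bf (DL)} provides only logarithmic moments on $\mathcal V$, and {\bf (MM)} is deliberately stated at $\theta_0$ only (the paper stresses that the moment conditions are ``simpler to verify'' at $\theta_0$ than for $\theta\neq\theta_0$ precisely because the specific form of $s_t$ collapses there). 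Away from $\theta_0$ the Hessian contains terms like $X_t^T\nabla^2\ell^{-1}(g_t(\theta))X_t$ whose integrability is not controlled by the hypotheses. Consequently the uniform LLN you need is not available, and neither is the continuity of $\theta\mapsto\E[\mathbb H s_0(\theta)]$ that you also use.

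The paper circumvents this by organising the expansion differently: it Taylor--expands $S_n$ (built from the stationary $g_t$) rather than $\hat S_n$, obtaining per--coordinate intermediate points $\tilde\theta_{n,i}\to\theta_0$, and then shows
\[
n^{-1}\Big\|\mathbb H S_n(\tilde\theta_n)-\sum_{t=1}^n\mathbb H s_t(\theta_0)\Big\|\to 0\quad\text{a.s.}
\]
via the same contraction/continuity mechanism used in the proof of Theorem \ref{pr:vf} (control of $g_t(\tilde\theta_n)-g_t(\theta_0)$ and its derivatives through the SRE on $\mathcal V$) together with a Cesaro argument; the ergodic theorem is then applied only at the single point $\theta_0$, where {\bf (MM)} suffices. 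The discrepancy between $\hat S_n$ and $S_n$ is handled separately, as a third condition $n^{-1/2}\|\nabla\hat S_n(\hat\theta_n)-\nabla S_n(\hat\theta_n)\|\to 0$, obtained from the e.a.s.\ bound on $\sup_{\mathcal V}\|\nabla\hat g_t-\nabla g_t\|$ exactly as you describe in your step (i). If you rewrite your step (ii) along these lines---replacing the uniform LLN by the pointwise ergodic theorem at $\theta_0$ plus the Theorem \ref{pr:vf}--type argument for the random drift $\tilde\theta_n\to\theta_0$---your proof goes through.
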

\begin{proof} Under {\bf (CI)} and {\bf(LM)} we apply Theorem \ref{prop:log} to $(\sup_{\mathcal V}g_t(\theta))$. We obtain $\E[ \log^+(\sup_{\mathcal V}g_0(\theta))]<\infty$. From the Lipschitz condition in {\bf (DL)} we have that $\E[\sup_{\mathcal V}\log^+(\|\Phi_0(\theta)\|)]<\infty$ for $\Phi_0(\theta)=D_\theta(\phi_0)(g_0(\theta),\theta)$ or $ D_{x^2}^2(\phi_0)(g_0(\theta),\theta)$ or
 $D_{\theta,x}^2(\phi_0)(g_0(\theta),\theta)$ or $D_{\theta^2}^2(\phi_0)(g_0(\theta),\theta)$. Using the existence of these logarithmic moments and the relation $\E[\log(\sup_{\mathcal V}\Lambda_{\phi_0})]<0$, we apply recursively the Theorem \ref{th:bo} and prove the existence 
of continuous first and second derivatives of $(g_t(\theta))$ on $\mathcal V$ as solutions of functional SRE. 
The asymptotic normality follows from the Taylor development of Section 5 of \citet{BW2009} on the first partial derivatives $\nabla_i$ of 	$S_n$
$$
\nabla_iS_n(\hat\theta_n)-\nabla_iS_n( \theta_0)=\mathbb H S_n(\tilde\theta_{n,i})(\hat\theta_n-\theta_0)\quad\mbox{for all}~1\le i\le d.
$$
Then the asymptotic normality follows from the following sufficient conditions:
\begin{enumerate}
\item  $n^{-1/2}\nabla S_n(\theta_0)\to \mathcal N(0,\bfQ)$,
\item  $\|n^{-1}\mathbb H S_n(\tilde\theta_n)-\bfP\|$ converges a.s. 
to $0$ for any sequence $(\tilde \theta_n)$ converging a.s. to $\theta_0$ and  $ \bfP$  is invertible,
\item  $ n^{-1/2}\|\nabla \hat S_n(\hat\theta_n)-\nabla S_n(\hat\theta_n)\|$
  converges a.s. to $0$.
\end{enumerate}
Due to its specific expression and  Assumption {\bf (IN)} the process $(\nabla S_n(\theta_0))$ is a martingale. Under {\bf (MM)}, the CLT for  martingales applied to $(\nabla S_n(\theta_0))$ leads to the first condition. The first part of the second condition is derived
 from similar arguments than in the proof of Theorem \ref{pr:vf} and an application of the Cesaro mean theorem ensuring that
$
n^{-1} \|\mathbb H S_n(\tilde\theta_n)-\sum_{t=1}^n\mathbb H s_t(\theta_0) \|\to 0
$ a.s.
The ergodic Theorem on $(\mathbb H s_t(\theta_0))$ with {\bf (MM)} leads to $\|n^{-1}\mathbb H S_n(\tilde\theta_n)-\bfP\|\to 0$ a.s.
The fact that $\bfP$ is invertible follows from {\bf (LI)}, see \citet{BW2009} for detailed computations. Finally the third condition is obtained 
by applying Theorem 2.10 of \cite{SM2006} to the SRE satisfied by $(\nabla g_t )$ and its approximative SRE satisfied by $(\nabla \hat g_t )$ uniformly on $\mathcal V$. 
Thus $ \sup_{\mathcal V}\|\nabla \hat g_t-\nabla g_t \|\xrightarrow{\mbox{e.a.s.}}0$ as $t\to\infty$ and Lipschitz conditions on $\nabla \ell^{-1}$ and $\nabla \log(\det(\ell))$ in {\bf (LM)} and arguments similar than in \citet{Straumann2005} leads to the desired result. \end{proof}

\section{Applications to GARCH and EGARCH type models}\label{sec:egarch}

\subsection{Some applications to GARCH type models}\label{subs:garch}
In  GARCH type models, the stationarity assumption {\bf (ST)} is crucial, whereas the continuous invertibility condition {\bf (CI)} is automatically satisfied on well chosen compact sets $\Theta$ due to the form of the model (they are also uniformly invertible). The asymptotic properties of the QMLE (that coincides with the procedure described here in these cases, see Remark \ref{rem:qmle}) follow from Theorems \ref{th:sc} and \ref{th:an}. Thus, we recover and slightly refine existing results in the AGARCH and CCC-GARCH models (we refer the reader to \citet{Straumann2005} and \citet{Francq2011} respectively for details). Moreover, we prove also for the first time the strong consistency of the natural volatility forecast in both cases under the conditions of strong consistency of $\hat \theta_n$.\\

First, let us detail the case of the univariate  APGARCH($p,q$) model introduced in  \citet{Ding1993}, \citet{Zakoian1994} and studied in \citet{Straumann2005}:
$$
\sigma_t^2 = \alpha_0+\sum_{i=1}^p\alpha_i(|X_{t-i}|-\gamma X_{t-i})^2+\sum_{j=1}^q\beta_j\sigma_{t-j}^2, \qquad t\in\Z,
$$
where $\alpha_0>0$, $\alpha_i,$ $\beta_j\ge 0$ and $|\gamma|\le 1$ (it coincides  with the GARCH($p,q$) model if $\gamma=0$. Then we derive the strong consistency and the asymptotic normality directly from our Theorems \ref{th:sc} and \ref{th:an}. The conditions we obtained coincides with these of Theorem 5.5 and Theorem 8.1 of \citet{SM2006} except their useless condition (8.1)   as one does not need moment of any order.\\

Second, let us consider the multivariate CCC-GARCH($p,q$) model introduced by \citet{Bollerslev1990}, first studied in \citet{Jeantheau1998} and refined in \citet{Francq2011}
$$
Diag(\Sigma_t^2)= A_0+ \sum_{i=1}^q A_iDiag(X_{t-i}X_{t-1}^T)+\sum_{i=1}^p B_i Diag(\Sigma_{t-i}^2)
$$
and $(\Sigma_t^2)_{i,j}=\rho_{i,j}\sqrt{(\Sigma_t^2)_{i,i}(\Sigma_t^2)_{j,j})}$ for all $(i,j)$, where $Diag(M)$ is the vector of the diagonal elements of $M$. A necessary and sufficient conditions for {\bf (ST)} is given in term of top Lyapunov condition in \citet{Francq2011}. We recover the strong consistency and the asymptotic normality of \citet{Francq2011} directly from our Theorems \ref{th:sc} and \ref{th:an}.

\subsection{Application to the EGARCH($1,1$) model}

Let $(Z_t)$ be an iid sequence of random variables not concentrated on two points such that  $\E(Z_0^2)=1$.
The EGARCH($1,1$) model introduced by \citet{Nelson1991} is an AR($1$) model for $\log \sigma_t^2$,
$$
X_{t}  =  \sigma_{t}Z_{t}\quad\mbox{with}\quad
\log\sigma_{t}^{2}  =\alpha_0+\beta_0\log\sigma_{t-1}^{2}+W_{t-1}(\theta_0)
$$
where $W_{t}(\theta_0)=\gamma_0 Z_{t}+\delta_0\left|Z_{t}\right|$ are the innovations of this AR($1$) model. 
Let $\theta_0=(\alpha_0,\beta_0,\gamma_0,\delta_0)$ be the unknown parameter. Assume that  $ |\beta_0|<1$ such that   there exists a stationary solution having a MA($\infty$) representation: 
\begin{equation}
\label{eq:Log_var_MA}
\log\sigma_{t}^{2}=\alpha_0(1-\beta_0)^{-1}+\sum_{k=1}^{\infty}\beta_0^{k-1}W_{t-k}(\theta_0).
\end{equation}
The moments assumptions on $Z_t$ ensures that the process $(\log \sigma_t^2)$ is ergodic, strongly and weakly stationary. 
Then the volatilities process $( \sigma_t^2)$ is also ergodic and strongly stationary and {\bf (ST)} holds. However, it does not necessarily 
have finite moment of any order.\\

The invertibility of the stationary solution of the EGARCH($1,1$) model does not hold in general. A sufficient condition for invertibility is given in \citet{SM2006}.  As $(\log\sigma_{t}^{2})$ satisfies the SRE 
$$
\log \sigma_t^2=\alpha_0+\beta_0 \log \sigma_{t-1}^2+(\delta_0 X_{t-1}+ \gamma_0 |X_{t-1}|)\exp (- {\log \sigma_{t-1}^2}/{2} ),
$$
if it has a non anticipative solution the model is invertible. Keeping the notation of  Section 2, the function $h$ is now the logarithmic function and the SRE \eqref{SREtrue} holds with $(\phi_t)$ defined by
$$
\phi_{t}(\cdot;\theta):s  \mapsto  \alpha+\beta s+(\gamma X_{t-1}+\delta\left|X_{t-1}\right|)\exp(-s/2)\label{eq:Link function phi}
$$
We check that the $\phi_t$ are random functions generated by $\mathcal G_{t-1}$.
For any $\theta\in\R\times \R^+\times\{\gamma\ge |\delta|\}$ we restrict $\phi_{t}(\cdot;\theta)$
on the complete separable metric space $[\alpha/(1-\beta),\infty)$ equipped with $d(x,y)=|x-y|$. The process $(\phi_t(\cdot;\theta ))$ is a stationary ergodic sequence of Lipschitz maps from  
$[\alpha /(1-\beta ),\infty)$ to $[\alpha /(1-\beta ),\infty)$ with  Lipschitz coefficients
$$
\Lambda(\phi_{t}(\cdot,\theta_0))\le \max \{\beta ,2^{-1}(\gamma  X_{t-1}+\delta |X_{t-1}|)\exp(-2^{-1}\alpha /(1-\beta ))-\beta\}.
$$
The technical smoothness assumption  {\bf(CL)} is automatically satisfied as
$$
(\Lambda_{\phi_t}(\theta))_t=(\max(\beta,2^{-1}(\gamma X_{t-1}+\delta|X_{t-1}|)\exp(-2^{-1}\alpha/(1-\beta))-\beta))_t
$$ 
is a stationary process of continuous functions of $\theta$. The EGARCH(1,1) is continuously invertible on any compact  set $\Theta\subset\R\times \R^+\times\{\gamma\ge |\delta|\}$ such that 
\begin{equation}
\E[\log(\max\{ \beta , 2^{-1}(\gamma  X_{0}+\delta |X_{0}|)\exp(-2^{-1}\alpha /(1-\beta ))-\beta \})]<0.\label{eq:condtheta}
\end{equation}
This sufficient condition for continuous invertibility depends on the distribution of the observations $(X_t)$.
Notice that if $\theta_0\in\Theta$ then it satisfies the condition of stationarity $\beta_0<1$ and, from the MA($\infty$) representation  \eqref{eq:Log_var_MA} of $\log\sigma_t^2$, our condition \eqref{eq:condtheta} expressed at $\theta_0$ implies that
\begin{multline}
\E\Big[\log \Big(\max\Big \{\beta_0,2^{-1}\exp\Big(2^{-1}\sum_{k=0}^{\infty}\beta_0^{k}(\gamma_0 Z_{-k-1}+\delta_0 \left|Z_{-k-1}\right|)\Big)
\\\times(\gamma_0 Z_{0}+\delta_0 \left|Z_{0}\right|)-\beta_0\Big\}\Big)\Big]<0.
\label{eq:condtheta2}
\end{multline}
It is the condition of invertibility of the EGARCH(1,1) model given in \cite{SM2006}.  Applying our results of Theorem \ref{th:sc} and Theorem \ref{pr:vf}, we obtain
\begin{Theo}\label{cor:sc}
For any  compact subset $\Theta$ of $\R\times \R^+\times\{\gamma\ge |\delta|\}$ satisfying   \eqref{eq:condtheta} then $\hat\theta_n\to\theta_0$ and $\hat\sigma_n^2-\sigma_n^2\to 0$ a.s. as $n\to \infty$ with $\hat\sigma^2_t =\exp(\hat g_t(\hat \theta_n))$  if $\theta_0\in {\Theta}$.
\end{Theo}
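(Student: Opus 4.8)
The plan is to check that, on a compact $\Theta\subset\R\times\R^+\times\{\gamma\ge|\delta|\}$ satisfying \eqref{eq:condtheta} and containing $\theta_0$, the EGARCH($1,1$) model fulfils every hypothesis of Theorems \ref{th:sc} and \ref{pr:vf}; the two displayed convergences are then immediate. Assumption {\bf (ST)} has already been obtained from $|\beta_0|<1$ and $\E[Z_0^2]<\infty$ via the MA($\infty$) representation \eqref{eq:Log_var_MA}, which in particular gives $\E[\log^+\sigma_0^2]<\infty$ and hence $\E[\log^+|X_0|]<\infty$. Assumption {\bf (CI)} is verified with $r=1$: the discussion preceding the theorem exhibits the complete separable state space $[\alpha/(1-\beta),\infty)$, the continuous Lipschitz bound $\Lambda_{\phi_0}(\theta)=\max\{\beta,\,2^{-1}(\gamma X_0+\delta|X_0|)\exp(-2^{-1}\alpha/(1-\beta))-\beta\}$ which satisfies {\bf (CL)}, and the Lyapunov inequality $\E[\log\Lambda_{\phi_0}(\theta)]<0$ on $\Theta$ is exactly \eqref{eq:condtheta} (which in passing forces $\beta<1$, hence $1-\beta$ bounded below on the compact $\Theta$). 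The remaining moment conditions of {\bf (CI)}, namely $\E[\sup_\Theta\log^+\Lambda_{\phi_0}(\theta)]<\infty$ and $\E[\sup_\Theta\log^+(d(\phi_0(y,\theta),y))]<\infty$ with $y$ any constant larger than $\sup_\Theta\alpha/(1-\beta)<\infty$, follow from the compactness of $\Theta$, the subadditivity $\log^+(a+b)\le\log^+(2a)+\log^+(2b)$, and $\E[\log^+|X_0|]<\infty$. Assumption {\bf (IN)} is the standing hypothesis on $(Z_t)$, and {\bf (IV)} holds with link function $\ell=\exp$: on $[\alpha/(1-\beta),\infty)$ the map $\ell^{-1}(y)=e^{-y}$ is Lipschitz (uniformly over $\Theta$, since $1-\beta$ is bounded below), $\log\det\ell(y)=y$ is $1$-Lipschitz, and $\det\ell(g_0(\theta))=e^{g_0(\theta)}\ge e^{\alpha/(1-\beta)}=:C(\theta)>0$ with $C$ continuous on $\Theta$.

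There remains the identifiability: $g_0(\theta)=h(\Sigma_0)=\log\sigma_0^2$ a.s. must imply $\theta=\theta_0$. Since $\theta_0\in\Theta$, {\bf (CI)} yields invertibility at $\theta_0$, so $g_t(\theta_0)=\log\sigma_t^2$; the process $(X_t)_{t\in\Z}$ being stationary, the equality $g_0(\theta)=g_0(\theta_0)$ a.s. propagates to $g_t(\theta)=\log\sigma_t^2$ a.s. for every $t$. Subtracting the SRE at $\theta$ from the SRE at $\theta_0$ at time $t$ and substituting $X_{t-1}e^{-\log\sigma_{t-1}^2/2}=Z_{t-1}$ and $|X_{t-1}|e^{-\log\sigma_{t-1}^2/2}=|Z_{t-1}|$ gives
$$(\alpha-\alpha_0)+(\beta-\beta_0)\log\sigma_{t-1}^2+(\gamma-\gamma_0)Z_{t-1}+(\delta-\delta_0)|Z_{t-1}|=0\qquad a.s.$$
As $\log\sigma_{t-1}^2$ is $\cF_{t-2}$-measurable, hence independent of $Z_{t-1}$, conditioning on $Z_{t-1}$ forces $(\gamma-\gamma_0)z+(\delta-\delta_0)|z|$ to be a.s. constant under the law of $Z_0$ and $(\alpha-\alpha_0)+(\beta-\beta_0)\log\sigma_{t-1}^2$ to be a.s. the opposite constant. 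Because $Z_0$ is centered (so charges both half-lines) and not concentrated on two points, a short case analysis on the two slopes $(\gamma-\gamma_0)\pm(\delta-\delta_0)$ forces both slopes to vanish, i.e. $\gamma=\gamma_0$ and $\delta=\delta_0$. Then $(\alpha-\alpha_0)+(\beta-\beta_0)\log\sigma_{t-1}^2=0$ a.s.; excluding the trivial case $\gamma_0=\delta_0=0$ (where $\log\sigma_t^2$ is constant and the model degenerate), $\log\sigma_{t-1}^2$ is non-degenerate by \eqref{eq:Log_var_MA}, whence $\beta=\beta_0$ and then $\alpha=\alpha_0$.

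With all the hypotheses of Theorem \ref{th:sc} in force we obtain $\hat\theta_n\to\theta_0$ a.s., and Theorem \ref{pr:vf} then yields $\hat\sigma_n^2-\sigma_n^2=\ell(\hat g_n(\hat\theta_n))-\ell(g_n(\theta_0))\to0$ a.s. The main obstacle is the identifiability step: making rigorous that the piecewise-linear map $z\mapsto(\gamma-\gamma_0)z+(\delta-\delta_0)|z|$ can be a.s. constant under the law of $Z_0$ only when $\gamma=\gamma_0$ and $\delta=\delta_0$ requires the careful distinction of whether each of the slopes $(\gamma-\gamma_0)+(\delta-\delta_0)$ and $(\gamma-\gamma_0)-(\delta-\delta_0)$ vanishes and on which half-lines $Z_0$ charges positive mass --- this is precisely where the ``not concentrated on two points'' and centering assumptions enter. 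Every other step is a routine specialization of the general theory of Sections \ref{sec:ci}--\ref{sec:inf}.
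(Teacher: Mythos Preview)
Your proof is correct and follows the same overall route as the paper: verify {\bf (ST)}, {\bf (CI)}, {\bf (IN)}, {\bf (IV)} and identifiability for the EGARCH($1,1$) specification, then invoke Theorems~\ref{th:sc} and~\ref{pr:vf}. The verification of each condition is done in the same way (Lyapunov condition from \eqref{eq:condtheta}, finiteness of $\E[\log^+|X_0|]$ from the MA($\infty$) representation, Lipschitzness of $e^{-x}$ on the half-line, lower bound $C(\theta)=e^{\alpha/(1-\beta)}$).

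The only substantive difference is in the identifiability step: the paper simply cites Section~5.1 of \citet{SM2006}, whereas you give a self-contained argument by subtracting the two SREs, using the independence of $\log\sigma_{t-1}^2$ (which is $\cF_{t-2}$-measurable) from $Z_{t-1}$, and then exploiting that $Z_0$ is centred and not supported on two points to kill the piecewise-linear term. Your argument is correct and has the advantage of making explicit where the ``not concentrated on two points'' hypothesis enters and of flagging the degenerate case $\gamma_0=\delta_0=0$; the paper's deferral to \citet{SM2006} is shorter but opaque. A minor technical point: you choose the reference point $y>\sup_\Theta\alpha/(1-\beta)$ so that $y$ lies in every $\theta$-dependent state space, while the paper takes $y=0$; your choice is cleaner, though by stationarity and since $\phi_0(0,\theta)$ is well defined as a formula the paper's choice also works.
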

\begin{proof}
The condition   
{\bf(CI)} follows from $\E[\log\Lambda(\phi_t (,\theta))]<0$ by assumption of $\Theta$ and $\E[\sup_{\Theta}\log\Lambda(\phi_t (,\theta))]<\infty$
 since $\E\log|X_{t-1}|=E(\log\sigma+\log|Z_{t-1}|)<\infty$ as $\log\sigma^2_t$ has a MA($\infty$) representation 
\eqref{eq:Log_var_MA} and $Z$ is integrable. Moreover as $\log^{+}(d(\phi_{0}(0,\theta),0))=\log^{+}|\alpha+(\gamma X_{-1}+\delta|X_{-1}|)|$ then  fixing $y=0$ one has
$\E[\sup_{\Theta}(\log^{+}(d(\phi_{0}(y,\theta),y))^2]<\infty$.\\
In the EGARCH($1,1$) model  the link function is the exponential function $\ell(x)=\exp(x)$ and since we have 
$\log\sigma^2_t\ge \alpha/(1-\beta)$, $1/\ell(x)=\exp(-x)$ is a Lipschitz function ($\log (\det(\ell))=id$ is also   a Lipschitz function).
Moreover the volatility process $(\sigma_t^2)$ is bounded from below by $C(\theta)=\exp(\alpha/(1-\beta))$. 
Finally, the identifiability condition  $ g_0(\theta)=h(\theta_0 )$ iff $\theta=\theta_0$ is checked in Section 5.1 of \citet{SM2006}. \end{proof}

Notice that the procedure is valid only if $\theta\in\Theta$ satisfies \eqref{eq:condtheta}. In practice, we suggest to optimize the QLIK criteria under the empirical constraint
$$
\sum_{t=1}^n\log(\beta,2^{-1}(\gamma X_{t}+\delta|X_t|)\exp(-2^{-1}\alpha/(1-\beta))-\beta)<0.
$$ 
Existing procedures does not constrain the model to be invertible. The corresponding empirical results are not asymptotically valid since the underlying SREs are not stable, see \citet{WC} for evidences on simulations. Notice also that the strong consistency of $\hat\theta_n$ has already been proved in \citet{Straumann2005} and \citet{SM2006} under the uniform invertibility condition
$$
\E[\sup_\Theta\log(\max\{ \beta , 2^{-1}(\gamma  X_{t-1}+\delta |X_{t-1}|)\exp(-2^{-1}\alpha /(1-\beta ))-\beta \})]<0.
$$
We share the appreciation of the authors  that this condition is very restrictive and might  not be satisfied for reasonable compact sets $\Theta$ with $\beta\neq0$ and non empty interior.\\

As a corollary of Theorem \ref{th:an} we get the asymptotic normality of the statistical inference in the EGARCH($1,1$) model. The only existing result was Theorem 5.7.9 of \citet{Straumann2005} valid only for degenerate EGARCH(1,1) models with $\beta=0$. Our result holds under consistency assumptions and the following necessary and sufficient condition for the existence of  $\bfV$:
\begin{description}
\item[(MM')]  $\E[Z_0^4]<\infty$ and $\E[(\beta_0 -2^{-1}(\gamma_0 Z_0+\delta_0\left|Z_0\right|)^2]<1$.
\end{description}
\begin{Theo}
Assume that assumptions of Theorem \ref{cor:sc} are satisfied and that {\bf (MM')} holds. Then   $\sqrt n(\hat\theta_n-\theta_0)\stackrel{d}{\to} \mathcal N(0,\bfV)$ where $\bfV$ is an invertible matrix  if $\theta_0\in \stackrel{\circ}{\Theta}$.

\end{Theo}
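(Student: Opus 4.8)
The plan is to deduce the statement from Theorem~\ref{th:an}: the hypotheses of Theorem~\ref{th:sc} are exactly those of Theorem~\ref{cor:sc}, so it suffices to check that the EGARCH($1,1$) model satisfies \textbf{(AV)}, \textbf{(MM)}, \textbf{(LI)}, \textbf{(DL)} and \textbf{(LM)} at the point $\theta_0$. The smoothness and Lipschitz requirements are immediate from the explicit form of the model: $\ell=\exp$ is $C^\infty$ and so is $\phi_t(\cdot,\theta)=\alpha+\beta s+(\gamma X_{t-1}+\delta|X_{t-1}|)\exp(-s/2)$ jointly in $(s,\theta)$, which gives \textbf{(AV)}; since $\ell^{-1}=\exp(-\cdot)$ has derivative $-\exp(-\cdot)$, bounded and Lipschitz on the state space $[\alpha/(1-\beta),\infty)$, and $\log\det\ell=\mathrm{id}$ has constant gradient, \textbf{(LM)} holds. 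For \textbf{(DL)} I would argue exactly as in the proof of Theorem~\ref{cor:sc}: each of $D_x\phi_t$, $D_\theta\phi_t$, $D^2_{x^2}\phi_0$, $D^2_{\theta,x}\phi_0$, $D^2_{\theta^2}\phi_0$ is continuous in $\theta$ with Lipschitz constant (in $x$) a continuous, stationary function of $\theta$ dominated on the compact $\mathcal V$ by an affine function of $|X_{t-1}|$ times the bounded factor $\exp(-\alpha/(2(1-\beta)))$ (with $D^2_{\theta^2}\phi_t\equiv 0$), so all the relevant logarithmic moments are finite because $\E\log|X_0|<\infty$; the squared-log-moment condition at $y=0$ is the bound $\E[\sup_{\mathcal V}(\log^+|\alpha+(\gamma X_{-1}+\delta|X_{-1}|)|)^2]<\infty$ already established in the proof of Theorem~\ref{cor:sc}, which uses that $\log\sigma_0^2$ has a square-integrable MA$(\infty)$ representation (the innovations $W_{-k}(\theta_0)$ lying in $L^2$ because $\E[Z_0^2]=1$) and that $\E[(\log^+|Z_0|)^2]<\infty$.

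The heart of the argument, and where \textbf{(MM')} enters, is \textbf{(MM)}; here I would proceed as follows. By the proof of Theorem~\ref{th:an}, $\nabla g_t(\theta_0)$ and $\mathbb H g_t(\theta_0)$ exist as the stationary solutions of the linear SREs obtained by differentiating \eqref{SREgen}, both driven by the same scalar multiplier $a_t:=D_x\phi_t(g_{t-1}(\theta_0),\theta_0)=\beta_0-\tfrac12(\gamma_0X_{t-1}+\delta_0|X_{t-1}|)\exp(-g_{t-1}(\theta_0)/2)$. Using the identity $X_{t-1}\exp(-g_{t-1}(\theta_0)/2)=\sigma_{t-1}Z_{t-1}/\sigma_{t-1}=Z_{t-1}$ one gets $a_t=\beta_0-\tfrac12(\gamma_0Z_{t-1}+\delta_0|Z_{t-1}|)$, so $(a_t)$ is an iid sequence with $\E[a_0^2]<1$ by the second half of \textbf{(MM')}. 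Writing $\nabla g_t(\theta_0)=\sum_{k\ge0}a_t\cdots a_{t-k+1}\,b_{t-k}$ with inhomogeneous term $b_{t-k}=D_\theta\phi_{t-k}(g_{t-k-1}(\theta_0),\theta_0)=(1,\log\sigma_{t-k-1}^2,Z_{t-k-1},|Z_{t-k-1}|)^T$, the factor $a_t\cdots a_{t-k+1}$ depends only on $Z_{t-1},\dots,Z_{t-k}$ and is therefore independent of $b_{t-k}$, which depends on $Z_{t-k-1},Z_{t-k-2},\dots$; Minkowski's inequality gives $\|\nabla g_0(\theta_0)\|_2\le\|b_0\|_2\sum_{k\ge0}(\E[a_0^2])^{k/2}<\infty$, since $b_0\in L^2$ by the same MA$(\infty)$ argument. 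The identical block-independence argument applied to the SRE for $\mathbb H g_t(\theta_0)$, whose inhomogeneous term is a finite sum of products of functions of $Z_{t-1}$ (in every $L^p$ because $\E[Z_0^4]<\infty$) with $\nabla g_{t-1}(\theta_0)$ or $\nabla g_{t-1}(\theta_0)\nabla g_{t-1}(\theta_0)^T$, gives $\mathbb H g_0(\theta_0)\in L^1$. Finally, because $\ell=\exp$ and $X_0^2\exp(-g_0(\theta_0))=Z_0^2$, one has $\nabla s_0(\theta_0)=\tfrac12(1-Z_0^2)\nabla g_0(\theta_0)$ and $\mathbb H s_0(\theta_0)=\tfrac12\big((1-Z_0^2)\mathbb H g_0(\theta_0)+Z_0^2\,\nabla g_0(\theta_0)\nabla g_0(\theta_0)^T\big)$; as $Z_0$ is independent of the $\mathcal G_{-1}$-measurable quantities $\nabla g_0(\theta_0)$ and $\mathbb H g_0(\theta_0)$, this yields $\E[\|\nabla s_0(\theta_0)\|^2]=\tfrac14(\E[Z_0^4]-1)\,\E[\|\nabla g_0(\theta_0)\|^2]<\infty$ and $\E[\|\mathbb H s_0(\theta_0)\|]<\infty$, which is \textbf{(MM)}.

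It remains to check \textbf{(LI)}. If $\lambda^T\nabla g_0(\theta_0)=0$ a.s.\ for some $\lambda\in\R^4$, then unfolding the linear recursion for $\nabla g_0(\theta_0)$ by one step isolates, modulo a $\mathcal G_{-2}$-measurable remainder, the combination $\lambda^Tb_0=\lambda_1+\lambda_2\log\sigma_{-1}^2+\lambda_3Z_{-1}+\lambda_4|Z_{-1}|$; conditioning on $\mathcal G_{-2}$ and using that $Z_{-1}$ is independent of it and not concentrated on two points, so that $1$, $Z_{-1}$ and $|Z_{-1}|$ are linearly independent, forces $\lambda_3=\lambda_4=0$, and peeling off further lags then forces $\lambda_1=\lambda_2=0$; this is the argument of Section~5.1 of \citet{SM2006} transposed to the derivative process. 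With \textbf{(AV)}, \textbf{(MM)}, \textbf{(LI)}, \textbf{(DL)} and \textbf{(LM)} in force, Theorem~\ref{th:an} gives that $\bfV=\bfP^{-1}\bfQ\bfP^{-1}$ is a well-defined invertible matrix and $\sqrt n(\hat\theta_n-\theta_0)\stackrel{d}{\to}\mathcal N(0,\bfV)$ when $\theta_0\in\stackrel{\circ}{\Theta}$. I expect the main obstacle to be the step inside \textbf{(MM)}: one must upgrade the logarithmic contraction \eqref{eq:condtheta} used for consistency to the genuine $L^2$-contraction $\E[(\beta_0-\tfrac12(\gamma_0Z_0+\delta_0|Z_0|))^2]<1$ of \textbf{(MM')}, and combine the block independence of the iid multipliers $a_t$ from the inputs $b_{t-k}$ with the clean reduction of $\nabla s_0$ and $\mathbb H s_0$ at $\theta_0$ to low-degree polynomials in $Z_0$ and $\nabla g_0(\theta_0)$; the converse, that \textbf{(MM')} is also necessary for the finiteness of $\bfV$, relies on the corresponding necessity of the $L^2$-contraction for stationary $L^2$-solutions of linear SREs and is carried out in the Appendix.
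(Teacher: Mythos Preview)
Your proof is correct and follows the same overall architecture as the paper: both reduce to checking \textbf{(AV)}, \textbf{(MM)}, \textbf{(LI)}, \textbf{(DL)}, \textbf{(LM)} and then invoke Theorem~\ref{th:an}. The treatment of \textbf{(AV)}, \textbf{(LM)}, \textbf{(DL)} and \textbf{(LI)} matches the paper's (the latter is delegated to Lemma~8.2 of \citet{SM2006} in both cases).

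The genuine difference is in \textbf{(MM)}. The paper proceeds by brute force in the Appendix: it writes $\nabla g_t(\theta_0)=\sum_{l\ge1}U_{t-l}\prod_{k=1}^{l-1}V_{t-k}$ (your $b$ and $a$ are their $U$ and $V$), expands the square, and computes each diagonal entry $\bfB_{ii}=\E[(\partial_i g_0(\theta_0))^2]$ explicitly as a rational function of $\E V_0$, $\E V_0^2$, $\beta_0$, etc., thereby showing $\sum_i\bfB_{ii}<\infty$ iff $\E V_0^2<1$. Your route is more conceptual: you observe that the product $a_t\cdots a_{t-k+1}$ is $\sigma(Z_{t-1},\dots,Z_{t-k})$-measurable while $b_{t-k}$ is $\sigma(Z_{t-k-1},Z_{t-k-2},\dots)$-measurable, hence independent, and conclude $\|\nabla g_0(\theta_0)\|_2\le\|b_0\|_2\sum_{k\ge0}(\E a_0^2)^{k/2}<\infty$ via Minkowski; the same block-independence argument handles $\mathbb H g_0(\theta_0)\in L^1$. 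This is shorter and isolates exactly why the $L^2$-contraction $\E V_0^2<1$ of \textbf{(MM')} is the right hypothesis. What the paper's explicit computation buys in exchange is the necessity direction (and closed-form expressions for the entries of $\bfB$, hence of $\bfV$), which your argument does not give and which you correctly defer to the Appendix.

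One small overstatement: ``in every $L^p$ because $\E[Z_0^4]<\infty$'' is false as written; $\E Z_0^4<\infty$ only gives $L^4$. But your argument only needs the factors in $L^1$ or $L^2$, which holds, so this does not affect correctness.
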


\begin{proof}
By definition, $(\phi_t)$ is 2-times continuously differentiable and simple computations give 
$D_x(\phi_t)(x,\theta)=\beta-2^{-1}(\gamma X_{t-1}+\delta |X_{t-1}|)\exp(-x/2),
 D_\theta(\phi_t)(x,\theta)=(1,x,X_{t-1}\exp(-x/2),|X_{t-1}|\exp(-x/2))^T,
D_{x^2}^2(\phi_t)(x,\theta)=4^{-1}(\gamma X_{t-1}+\delta  |X_{t-1}|)$ $\exp(-x/2),$ $D_{x,\theta}^2(\phi_t)(x,\theta)=(0,1,2^{-1}X_{t-1}\exp(-x/2),$ $2^{-1}|X_{t-1}|\exp(-x/2) )^T$
 and $D_{\theta^2}^2(\phi_t)(x,\theta)=0.$
 Moreover, as  the link function is $\ell(x)=\exp(x)$ is also 2-times continuously differentiable, the last assertion of the condition {\bf (AV)} of Theorem \ref{th:an}  holds. 
The fact that {\bf (MM)}  holds iff {\bf (MM')}  holds is technical and postponed to the Appendix.   The fact that {\bf (LI)}  holds if $Z_0$ is not concentrated on two points is classical, see  for instance Lemma 8.2 of \cite{SM2006}. Assumption {\bf (DL)} is satisfied from the expressions of the derivatives (that are Lipschitz functions) and as all the logarithmic moments are finite due to $\E[ \log (X_{t-1}^2)]<\infty$. Finally {\bf (LM)} is automatically satisfied due to the specific expression of the link function. Thus Theorem \ref{th:an} applies. \end{proof}

\subsubsection*{Acknowledgments} One of the author would like to thank C. Francq and J.-M. Zako\"ian for helpful discussions and for having pointing out some mistakes in an earlier version.
\bibliographystyle{imsart-nameyear}  \bibliography{egarch}

\section*{Appendix:   {\bf (MM)} holds in the EGARCH(1,1) model iff {\bf (MM')} holds}

To check {\bf (MM)} is technical as we have to compute explicitly the diagonal terms the matrix $\bfB=\E [\nabla g_{t}(\theta_{0})(\nabla g_{t}(\theta_{0}))^{T} ]$.
Similar computations have been done in \citet{DK2009}. Remember that $\bfV=\bfP^{-1}\bfQ\bfP^{-1}$ with $\bfP=\E[\mathbb H s_0(\theta_0)]$
and $\bfQ=\E[\nabla s_0(\theta_0)\nabla s_0(\theta_0)^T]$. Let us first prove the three identities $\bfP=2^{-1}\bfB$, 
$\bfQ=4^{-1}(\E Z_0^4-1)\bfB$ and thus $\bfV=(\E Z_0^4-1)\bfB^{-1}$. For the first identity, we compute
\begin{align*}
\bfP& =2^{-1}\E\left[(\nabla g_{t}(\theta_{0})(\nabla g_{t}(\theta_{0}))^{T}Z_{0}^{2}+\mathbb Hg_{t}(\theta_{0})(1-Z_{0}^{2})\right]\\
 & =2^{-1}\E [\nabla g_{t}(\theta_{0})(\nabla g_{t}(\theta_{0}))^{T} ]=2^{-1}\bfB.
 \end{align*}
For the second identity, we compute
\begin{align*}
\bfQ& =\E\left[\frac{1}{4}\E\left[\nabla g_{t}(\theta_{0})(\nabla g_{t}(\theta_{0}))^{T}(1-\Z_{t}^{2})^{2}\right]|\cF_{t-1}\right]\\
 & =4^{-1}\E[(1-\Z_{0}^{2})^{2}]\E[\nabla g_{t}(\theta_{0})(\nabla g_{t}(\theta_{0}))^{T}]=4^{-1}(\E Z_{0}^{4}-1) \bfB\end{align*}
and the third identity follows the first ones. Thus, for checking the assumption {\bf (MM)}, it is enough to check that diagonal coefficients $\bfB_{ii}$ are well defined when $\E(Z^4_0)<\infty$.
Let us denote
$W_{t}=\gamma_0 Z_{t}+\delta_0 |Z_{t} |$, $U_{t}=(1,\log\sigma_{t}^{2},Z_{t},|Z_{t}|)$ and $V_{t}=\beta_0 -2^{-1}(\gamma_0 Z_{t}+\delta_0\left|Z_{t}\right|)$.
 Then $(\nabla g_{t}(\theta_{0}))$ is the solution of the linear SRE
$$
\nabla g_{t}(\theta_{0})=U_{t-1}+V_{t-1}\nabla g_{t-1}(\theta_{0})=\sum_{l=1}^{\infty}\left(U_{t-l}\prod_{k=1}^{l-1}V_{t-k}\right).$$Using the convention $\prod_{k=1}^{0}V_{t-k}=1$, we obtain the expression
$$
\nabla g_{t}(\theta_{0})=\sum_{l=1}^{\infty}\left(U_{t-l}\prod_{k=1}^{l-1}V_{t-k}\right).
$$
More precisely, we have the expressions:
then 
\begin{align*}
\bfB_{11}&=\E\left(\frac{\partial g_{t}(\theta_{0})}{\partial\theta_{1}}\right)^{2}=\E\left[\sum_{l=1}^{\infty}\prod_{k=1}^{l-1}V_{t-k}\right]^{2},\\
\bfB_{22}&=\E\left(\frac{\partial g_{t}(\theta_{0})}{\partial\theta_{2}}\right)^{2}=\E\left[\sum_{l=1}^{\infty}\log\sigma_{t-l}^{2}\prod_{k=1}^{l-1}V_{t-k}\right]^{2},\\
\bfB_{33}&=\E\left(\frac{\partial g_{t}(\theta_{0})}{\partial\theta_{3}}\right)^{2}=\E\left[\sum_{l=1}^{\infty}Z_{t-l}\prod_{k=1}^{l-1}V_{t-k}\right]^{2},\\
\bfB_{44}&=\E\left(\frac{\partial g_{t}(\theta_{0})}{\partial\theta_{i}}\right)^{2}=\E\left[\sum_{l=1}^{\infty}\left|Z_{t-l}\right|\prod_{k=1}^{l-1}V_{t-k}\right]^{2}.
\end{align*}
To prove that condition {\bf (MM)} is satisfied, i.e. that $\sum_{i=1}^4\bfB_{ii}<\infty$, we use the following Lemma
\begin{lem}\label{lem:EV2}
$\sum_{i=1}^4\bfB_{ii}<\infty$ iff $\E V_0^2<1$.
\end{lem}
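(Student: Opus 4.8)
The plan is to reduce the statement that $\sum_{i=1}^4 \bfB_{ii}<\infty$ to the single scalar condition $\E V_0^2<1$ by exploiting the explicit MA($\infty$) representation $\nabla g_t(\theta_0)=\sum_{l\ge 1}U_{t-l}\prod_{k=1}^{l-1}V_{t-k}$, together with the independence structure of the innovations. Since $(V_t)$ and $(U_t)$ are built from the iid sequence $(Z_t)$, the products $\prod_{k=1}^{l-1}V_{t-k}$ over disjoint index blocks are independent, and $U_{t-l}$ is independent of $\prod_{k=1}^{l-1}V_{t-k}$. First I would compute, for a generic coordinate, $\E\big[\big(\sum_{l\ge 1}a_{t-l}\prod_{k=1}^{l-1}V_{t-k}\big)^2\big]=\sum_{l,l'\ge 1}\E\big[a_{t-l}a_{t-l'}\prod_{k=1}^{l-1}V_{t-k}\prod_{k=1}^{l'-1}V_{t-k}\big]$ where $a_t\in\{1,\log\sigma_t^2,Z_t,|Z_t|\}$; by the tower property and independence this is a double series whose terms are controlled by $(\E V_0^2)^{\min(l,l')-1}$ times constants depending on $\E[a_0^2]$, $\E|V_0|$, and (for the $\log\sigma_t^2$ coordinate) second moments of $\log\sigma_0^2$, which are finite because $\log\sigma_t^2$ has an MA($\infty$) representation with $\E Z_0^2<\infty$ and in fact $\E[(\log\sigma_0^2)^2]<\infty$ under the standing moment assumptions.

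Second I would show the ``if'' direction: assuming $\E V_0^2<1$, each double series converges. The key observation is that cross terms with $l<l'$ factor as $\E[a_{t-l'}\prod_{k=l}^{l'-1}V_{t-k}]\cdot\E[a_{t-l}V_{t-l}\cdots]$-type products after separating the overlapping block $\prod_{k=1}^{l-1}V_{t-k}^2$ from the non-overlapping tail; the overlapping part contributes $(\E V_0^2)^{l-1}$ and the remaining finitely-structured factors are uniformly bounded. One must be slightly careful with the coordinate $a_t=\log\sigma_t^2$ since $\log\sigma_{t-l}^2$ and $V_{t-k}$ for $k\le l-1$ share innovations $Z_{t-l-1},Z_{t-l-2},\dots$; here I would bound $|\log\sigma_{t-l}^2|$ crudely using its MA representation and use Cauchy--Schwarz to decouple, at the cost of replacing $\E V_0^2$ by $(\E V_0^4)^{1/2}$ in an auxiliary estimate — but since we only need finiteness, and $\E V_0^4<\infty$ follows from $\E Z_0^4<\infty$, geometric summability of the dominating series still follows once $\E V_0^2<1$, because the dominant geometric factor comes from the long shared block whose contraction rate is exactly $\E V_0^2$; the finitely many ``boundary'' factors near the overlap only contribute a bounded multiplicative constant. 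Summing the four coordinates gives $\sum_i \bfB_{ii}<\infty$.

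Third, for the ``only if'' direction, I would isolate a single diagonal term, most conveniently $\bfB_{11}=\E\big[(\sum_{l\ge1}\prod_{k=1}^{l-1}V_{t-k})^2\big]$, and show it is infinite when $\E V_0^2\ge 1$. Expanding and using independence across disjoint blocks, the diagonal contribution alone is $\sum_{l\ge1}\E\big[\prod_{k=1}^{l-1}V_{t-k}^2\big]=\sum_{l\ge1}(\E V_0^2)^{l-1}$, which diverges as soon as $\E V_0^2\ge1$; one checks the off-diagonal terms do not cancel this (they are nonnegative after grouping, or are handled by noting the series of partial sums is monotone and already unbounded from the diagonal piece alone, since $\prod V_{t-k}^2\ge 0$). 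Hence $\E V_0^2<1$ is necessary.

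The main obstacle I anticipate is the coordinate $a_t=\log\sigma_t^2$, where $a_{t-l}$ is \emph{not} independent of the product $\prod_{k=1}^{l-1}V_{t-k}$, so the clean block-independence argument used for the other three coordinates breaks down. The resolution is to absorb this dependence by a Cauchy--Schwarz step together with the explicit MA($\infty$) bound $\log\sigma_{t-l}^2=\alpha_0(1-\beta_0)^{-1}+\sum_{j\ge0}\beta_0^{j}W_{t-l-1-j}(\theta_0)$, verifying $\E[(\log\sigma_0^2)^{2+\varepsilon}]<\infty$ is not needed — second moments suffice — and checking that the extra factors introduced do not spoil the geometric rate $\E V_0^2$ that governs convergence. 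Everything else is bookkeeping with geometric series and the iid structure.
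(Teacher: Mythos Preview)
Your overall strategy matches the paper's: expand the square of each coordinate of $\nabla g_t(\theta_0)=\sum_{l\ge 1}U_{t-l}\prod_{k=1}^{l-1}V_{t-k}$ and exploit the iid structure of $(Z_t)$. Two points need repair.

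First, a factual slip. You write that $\log\sigma_{t-l}^2$ and $V_{t-k}$ for $k\le l-1$ share innovations, but they do not: $\log\sigma_{t-l}^2$ is $\sigma(Z_{t-l-1},Z_{t-l-2},\ldots)$-measurable while $\prod_{k=1}^{l-1}V_{t-k}$ depends only on $Z_{t-1},\ldots,Z_{t-l+1}$, so $U_{t-l}$ is genuinely independent of the product. The real dependence occurs only in the \emph{cross terms} $l<l'$, between $\log\sigma_{t-l}^2$ and the tail $\prod_{k=l}^{l'-1}V_{t-k}$. The paper resolves this not by Cauchy--Schwarz but by the exact substitution $\log\sigma_{t-l}^2=(\alpha_0+2\beta_0)/(1-\beta_0)-2\sum_{j\ge 1}\beta_0^{\,j-1}V_{t-l-j}$, which turns $\bfB_{22}$ into an explicit polynomial in the $V$'s and yields a closed-form value. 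Your Cauchy--Schwarz route also works (it gives the tail rate $(\E V_0^2)^{(l'-l)/2}$ and needs $\E[(\log\sigma_0^2)^4]<\infty$, which does follow from $\E Z_0^4<\infty$), but it is cruder and loses the exact formula.

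Second, and more seriously, your ``only if'' argument via $\bfB_{11}$ has a gap. The off-diagonal terms in the expansion of $\bfB_{11}$ equal $(\E V_0^2)^{l-1}(\E V_0)^{l'-l}$, which are negative whenever $\E V_0<0$ and $l'-l$ is odd; hence neither of your parenthetical justifications (``nonnegative after grouping'' or ``monotone partial sums'') is valid, and the diagonal series is not automatically a lower bound for $\bfB_{11}$. The clean route --- and the one implicit in the paper's computation --- is to use $\bfB_{33}$ instead: for $l<l'$ the factor $Z_{t-l'}$ is independent of $Z_{t-l}\prod_{k=1}^{l-1}V_{t-k}\prod_{k=1}^{l'-1}V_{t-k}$ and has mean zero, so every cross term vanishes exactly and
\[
\bfB_{33}=\E Z_0^2\sum_{l\ge 1}(\E V_0^2)^{l-1}
\]
with no cancellation possible. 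This is infinite precisely when $\E V_0^2\ge 1$, giving necessity immediately.
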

\begin{proof} That the first coefficient $B_{11}$ is finite comes easily from the identities
\begin{align*}
\bfB_{11}=\E(\sum_{l=1}^{\infty}\prod_{k=1}^{l-1}V_{t-k})^{2} & =\E(\sum_{l=1}^{\infty}\sum_{l^{'}=1}^{\infty}\prod_{k=1}^{l-1}V_{t-k}\prod_{k^{'}=1}^{l^{'}-1}V_{t-k^{'}})\\
 & =\E(2\sum_{l\geq1}^{\infty}(\prod_{k=1}^{l-1}V_{t-k})^{2}\sum_{l^{'}>l}^{\infty}\prod_{k^{'}=l}^{l^{'}-1}V_{t-k^{'}}+\E\sum_{l=1}^{\infty}(\prod_{k=1}^{l-1}V_{t-k}{}^{2})\\
 & =2\sum_{l\geq1}^{\infty}(\E V_{0}^{2})^{l-1}\frac{\E V_{0}}{1-\E V_{0}}+\frac{1}{1-\E V_{0}^{2}}\\
 & =2\frac{1}{1-\E V_{0}^{2}}\times\frac{\E V_{0}}{1-\E V_{0}}+\frac{1}{1-\E V_{0}^{2}}.
\end{align*}
For the second coefficient $\bfB_{22}$, it is more complicated. We need some preliminary work.
We know that $W_{t}=\gamma_0 Z_{t}+\delta_0\left|Z_{t}\right|=2(\beta_0-V_{t}),$
and 
$$
\log\sigma_{t}^{2}=\frac{\alpha_0}{1-\beta_0}+\sum_{k=1}^{\infty}\beta_0^{k-1}W_{t-k}=\frac{\alpha_0+2\beta_0}{1-\beta_0}-2\sum_{k=1}^{\infty}\beta^{k-1}V_{t-k}$$
 so, we decompose $\bfB_{22}$ into three parts, 
\begin{align*}
\bfB_{22}=&\E\left[\sum_{l=1}^{\infty}\log\sigma_{t-l}^{2}\prod_{k=1}^{l-1}V_{t-k}\right]^{2}\\
=&\E\left[\sum_{l=1}^{\infty}\left(\frac{\alpha_0+2\beta_0}{1-\beta_0}-2\sum_{k=1}^{\infty}\beta_0^{k-1}V_{t-l-k}\right)\prod_{k^{'}=1}^{l-1}V_{t-k^{'}}\right]^{2}\\
=&(\frac{\alpha_0+2\beta_0}{1-\beta_0})^{2}\E\left[\sum_{l=1}^{\infty}\prod_{k^{'}=1}^{l-1}V_{t-k^{'}}\right]^{2}+4\E\left[\sum_{l=1}^{\infty}\sum_{k=1}^{\infty}\beta_0^{k-1}V_{t-l-k}\prod_{k^{'}=1}^{l-1}V_{t-k^{'}}\right]^{2}\\
&-4\times\frac{\alpha_0+2\beta_0}{1-\beta_0}\E\left[\sum_{l=1}^{\infty}\sum_{k=1}^{\infty}\beta_0^{k-1}V_{t-l-k}\prod_{k^{'}=1}^{l-1}V_{t-k^{'}}\right].
\end{align*}

That the first term of the sum is finite is already known. For the last term, it is straightforward from
$
\E\sum_{l=1}^{\infty}\sum_{k=1}^{\infty}\beta_0^{k-1}V_{t-l-k}\prod_{k^{'}=1}^{l-1}V_{t-k^{'}}=(1-\beta_0)^{-1}\E V_{0}/(1-\E V_{0}).
$
For the second term of the sum, we need an expansion 
\begin{align*}
&\hspace{-1cm}\left[\sum_{l=1}^{\infty}\sum_{k=1}^{\infty}\beta_0^{k-1}V_{t-l-k}\prod_{k^{'}=1}^{l-1}V_{t-k^{'}}\right]^{2}\\
=&2\times\sum_{1\leq l<l^{'}<\infty1}^{\infty}\sum_{p,q=1}^{\infty}\beta_0^{p+q-2}V_{t-l-p}V_{t-l^{'}-q}\prod_{p^{'}=1}^{l-1}V_{t-p^{'}}^{2}\prod_{q^{'}=l}^{l^{'}-1}V_{t-q^{'}}\\
&+\sum_{l=1}^{\infty}\sum_{p,q=1}^{\infty}\beta_0^{p+q-2}V_{t-l-p}V_{t-l-q}\prod_{p^{'}=1}^{l-1}V_{t-p^{'}}^{2}\\
=&4\times\sum_{1\leq l<l^{'}<\infty}\sum_{1\leq p<q<\infty}\beta_0^{p+q-2}V_{t-l-p}V_{t-l^{'}-q}\prod_{p^{'}=1}^{l-1}V_{t-p^{'}}^{2}\prod_{q^{'}=l}^{l^{'}-1}V_{t-q^{'}}\\
&+2\times\sum_{1\leq l<l^{'}<\infty}\sum_{p=1}^{\infty}\beta_0^{2p-2}V_{t-l-p}V_{t-l^{'}-p}\prod_{p^{'}=1}^{l-1}V_{t-p^{'}}^{2}\prod_{q^{'}=l}^{l^{'}-1}V_{t-q^{'}}\\
&+2\sum_{l=1}^{\infty}\sum_{1\leq p<q<\infty}\beta_0^{p+q-2}V_{t-l-p}V_{t-l-q}\prod_{p^{'}=1}^{l-1}V_{t-p^{'}}^{2}+\sum_{l=1}^{\infty}\sum_{p=1}^{\infty}\beta_0^{2p-2}V_{t-l-p}^{2}\prod_{p^{'}=1}^{l-1}V_{t-p^{'}}^{2}
\end{align*}

and in expectation we obtain a bounded term if $\E V_0^2<1$:

\begin{align*}
&\hspace{-1cm}\E\left[\sum_{l=1}^{\infty}\sum_{k=1}^{\infty}\beta_0^{k-1}V_{t-l-k}\prod_{k^{'}=1}^{l-1}V_{t-k^{'}}\right]^{2}\\
=&4\times\frac{\E V_{0}^{2}}{1-\E V_{0}^{2}}\left[\frac{\beta_0}{(1-\beta_0)(1-\beta_0^{2})}\frac{\E V_{0}}{1-\E V_{0}}-\frac{1}{(1-\beta_0)(1-\beta_0^{2})}\frac{\E V_{0}\beta}{1-\beta_0^{2}\E V_{0}}\right]\\
&+4\times\frac{\beta_0(\E V_{0})^{3}}{(1-\beta_0)(1-\beta_0^{2})(1-\beta_0^{2}\E V_{0})}\frac{1}{1-\E V_{0}^{2}}\\
&+2\times\frac{1}{1-\beta_0^{2}}\frac{\E V_{0}^{2}}{1-\E V_{0}^{2}}\left[\frac{\E V_{0}}{1-\E V_{0}}-\frac{\E V_{0}}{1-\beta_0^{2}\E V_{0}}\right]\\
&+2\frac{1}{1-\E V_{0}^{2}}\frac{(\E V_{0})^{3}}{(1-\beta_0^{2})(1-\beta_0^{2}\E V_{0})}\\
&+2\frac{1}{1-\E V_{0}^{2}}(\E V_{0})^{2}\frac{\beta_0}{(1-\beta_0)(1-\beta_0^{2})}+\frac{\E V_{0}^{2}}{1-\E V_{0}^{2}}\frac{1}{1-\beta_0^{2}}.
\end{align*}
That $\bfB_{33}$ is finite under $\E V_0^2<1$ comes from
\begin{align*}
\bfB_{33}=&\E\left[\sum_{l=1}^{\infty}Z_{t-l}\prod_{k=1}^{l-1}V_{t-k}\right]^{2} \\
=&2\E\sum_{l=1}^{\infty}\sum_{l^{'}>l}^{\infty}Z_{t-l}Z_{t-l^{'}}\prod_{k=1}^{l-1}V_{t-k}\prod_{k^{'}=1}^{l^{'}-1}V_{t-k^{'}}
  +\E\sum_{l=1}^{\infty}Z_{t-l}^{2}(\prod_{k=1}^{l-1}V_{t-k})^{2}\\
 =&\E Z_{0}^{2}\sum_{l=1}^{\infty}(\E V_{0}^{2})^{l-1} 
  =\frac{\E Z_{0}^{2}}{1-\E V_{0}^{2}}.
\end{align*}
That  the last coefficient is also finite comes form the computation
\begin{align*}
\bfB_{44}=&\E\left[\sum_{l=1}^{\infty}\left|Z_{t-l}\right|\prod_{k=1}^{l-1}V_{t-k}\right]^{2}\\
=&2\E\sum_{l=1}^{\infty}\sum_{l^{'}>l}^{\infty}\left|Z_{t-l}\right|\left|Z_{t-l^{'}}\right|\prod_{k=1}^{l-1}V_{t-k}\prod_{k^{'}=1}^{l^{'}-1}V_{t-k^{'}}+\E\sum_{l=1}^{\infty}Z_{t-l}^{2}(\prod_{k=1}^{l-1}V_{t-k})^{2}\\
=&2\sum_{l=1}^{\infty}\sum_{l^{'}>l}^{\infty}\E\left|Z_{t-l^{'}}\right|\E\left(\prod_{k=1}^{l-1}V_{t-k}^{2}\right)\E\left(\left|Z_{t-l}\right|\prod_{k^{'}=l}^{l^{'}-1}V_{t-k^{'}}\right)+\frac{\E Z_{0}^{2}}{1-\E V_{0}^{2}}\\
=&2\sum_{l=1}^{\infty}\sum_{l^{'}>l}^{\infty}(\E\left|Z_{0}\right|)(\E V_{0}^{2})^{l-1}(\E\left|Z_{0}\right|V_{0})\E V_{0}^{l^{'}-l-1}+\frac{\E Z_{0}^{2}}{1-\E V_{0}^{2}}\\
=&\frac{2\E\left|Z_{0}\right|(\E\left|Z_{0}\right|V_{0})}{(1-\E V_{0})(1-\E V_{0}^{2})}+\frac{\E Z_{0}^{2}}{1-\E V_{0}^{2}}.
\end{align*}
\vspace{-6cm}
\end{proof}

\end{document}